\theoremstyle{plain}
\newtheorem{thm}{Theorem}[section]
\newtheorem{lem}{Lemma}[section]
\newtheorem{cor}{Corollary}[section]
\theoremstyle{definition}
\newtheorem{df}{Definition}[section]
\newtheorem{rem}{Remark}[section]
\Crefname{thm}{Theorem}{Theorems}
\Crefname{prop}{Proposition}{Propositions}
\Crefname{lem}{Lemma}{Lemmas}
\Crefname{cor}{Corollary}{Corollaries}
\Crefname{obser}{Observation}{Observations}
\Crefname{spec}{Speculation}{Speculations}
\Crefname{df}{Definition}{Definitions}
\Crefname{rem}{Remark}{Remarks}
\Crefname{ex}{Example}{Examples}
\Crefname{conj}{Conjecture}{Conjectures}
\Crefname{section}{Section}{Sections}
\Crefname{subsection}{Subsection}{Subsections}
\newcommand{\FF}{\mathbb{F}}
\newcommand{\ZZ}{\mathbb{Z}}
\newcommand{\NN}{\mathbb{N}}
\begin{document}

\title{A new approach to pancyclicity of Paley graphs I}

\author{Yusaku Nishimura}
\address{School of Fundamental Science and Engineering, Waseda University, Tokyo 169-8555, Japan}
\email{n2357y@ruri.waseda.jp}
\date{}

\begin{abstract}
  Let $G$ be an undirected graph of order $n$ and let $C_i$ be an $i$-cycle graph. 
  $G$ is called pancyclic if $G$ contains a $C_i$ for any $i\in \{3,4,\ldots,n\}$.
  We show that the pancyclicity of specific Cayley graphs and the Cartesian product of specific two graphs.
  As a corollary of these two theorems, we provide a new proof of the pancyclicity of the Paley graph.
\end{abstract}

\maketitle

{\small
	\noindent
	{\bfseries Key Words:}
	Pancyclic, Generalized Paley graph, Cayley graph
	
}

\section{Introduction} 

Let $H$ be a finite group, and let $S$ be a subset of $H$ that does not contain the identity element of $H$, is closed under the taking of inverses, 
and generates $H$. 
Then, a graph $G$ can be constructed with the vertex set of $H$, and the vertices $g$ and $h$ are adjacent if $gh^{-1}\in S$.
This graph, $G$, is called a Cayley graph and is denoted as $G=Cay(H,S)$.

Let $\FF_q$ be a finite field with order $q$, and let $(\FF_q^*)^2$, a subset of $\FF_q$, be the set of quadratic residues of $\FF_q$.
It is known that $(\FF_q^*)^2$ generates $\FF_q$ for any $q$, and that $(\FF_q^*)^2$ is closed under the taking of additive inverses if and only if $q\equiv 1\pmod{4}$. 
Therefore, when $q\equiv 1\pmod{4}$, we can consider the Cayley graph $Cay(\FF_q,(\FF_q^*)^2)$. 
The Cayley graph obtained in this case is known as a Paley graph.
Paley graphs are often denoted as $P(q)$, where $q$ is the order of the finite field.
In other words,
\[
  P(q)=Cay(\FF_q,(\FF^*_q)^2).
\]

Paley graphs have many interesting properties. 
For example, B.~Bollobás and A.~Thomason \cite{BOLLOBAS198113} showed that for any simple graph $G$ with order $r$, 
if $q$ is sufficiently large compared to $r$, then $G$ is an induced subgraph of $P(q)$. 
This also implies that there exists $q$ such that $G$ is a subgraph of $P(q)$.
From this property, we can define the Paley index, which is invariant for any simple graphs, as follows.

\begin{df}[\cite{PaleyInd}]\label{def:PaleyInd}
  We say a simple graph $G$ has Paley index $t$ if 
  \[
  t=\min\{q\in \NN\mid \mbox{$G$ is a subgraph of $P(q)$}\}. 
  \]
\end{df}

In this way, Paley graphs are known to possess various properties. 
It is also known that Paley graph $P(q)$ is pancyclic where $q\neq 5$.
Let us take a moment to explain the pancyclic property.

\begin{df}[\cite{BONDY197180}]
  An undirected graph $G$ with order $n\geq 3$ is pancyclic if it contains a $k$-cycle as a subgraph for every $k\in \{3,4,\ldots,n\}$.
\end{df}

Examples of pancyclic graphs include complete graphs and wheel graphs.
J.A.~Bondy put forward the following corollary about the pancyclicity.

\begin{cor}[\cite{BONDY197180}]\label{cor:panc}
  Let $G$ be a graph with order $n$ and edge set $E(G)$.
  If $|E(G)|\geq \frac{n^2}{4}$, then $G$ is either pancyclic or a complete bipartite graph $K_{\frac{n}{2},\frac{n}{2}}$.
\end{cor}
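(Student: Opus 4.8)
The plan is to follow Bondy's descent argument \cite{BONDY197180}: produce a cycle of each length from $n$ down to $3$ by repeatedly deleting one vertex from a cycle via a well-placed chord, using the density bound $|E(G)|\ge n^2/4$ to guarantee the required chord at every step, and showing that the only obstruction to some descent step is the rigid structure of $K_{\frac{n}{2},\frac{n}{2}}$. To set this up I would fix a cyclic spanning structure $C=v_1v_2\cdots v_nv_1$ on the vertices and classify each edge not lying on $C$ (a chord) by its span, the cyclic distance $|i-j|$ between its endpoints. The elementary observation driving the whole argument is that a chord $v_iv_{i+2}$ of span $2$ produces a cycle of length $n-1$ by routing $v_i\,v_{i+2}$ in place of the path $v_i\,v_{i+1}\,v_{i+2}$; iterating such local shortcuts on progressively shorter cycles is what realizes every intermediate length.

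The key steps, in order, would be: (1) obtain a spanning cycle $C$ of $G$, which anchors the top of the descent; (2) establish the descent lemma, that whenever $G$ contains an $\ell$-cycle with $4\le\ell\le n$ it also contains an $(\ell-1)$-cycle unless $G$ is the exceptional bipartite graph, by counting chords relative to the current cycle and arguing that $|E(G)|\ge n^2/4$ is too many edges to avoid a chord in a position that shortens the cycle; (3) apply the descent lemma repeatedly from $\ell=n$ down to $\ell=3$, which gives cycles of every length and hence pancyclicity; and (4) carry out the equality analysis, identifying exactly when the chord count in step (2) is forced to its tight value---this happens only if every edge joins two fixed classes of a balanced bipartition, which forces $n$ even and $G=K_{\frac{n}{2},\frac{n}{2}}$. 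Since $\frac{n^2}{4}$ is precisely the number of edges of $K_{\frac{n}{2},\frac{n}{2}}$, the bound is used with no slack, so step (4) is an extremal-rigidity computation that must reconstruct the exceptional graph exactly rather than merely up to a few edges.

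I expect the main obstacle to be step (1) together with the tightness in step (4), which are really two faces of the same extremal phenomenon: the value $\frac{n^2}{4}$ sits exactly at the Turán-type threshold attained by $K_{\frac{n}{2},\frac{n}{2}}$, so the hypothesis has to be exploited at full strength to anchor the descent at length $n$, and the same balanced bipartite configuration is the unique place where the descent can stall. Once the spanning cycle is secured, steps (2) and (3) reduce to a clean downward induction on the cycle length, and the bulk of the remaining effort is the careful bookkeeping of step (4) needed to pin the sole non-pancyclic graph down to $K_{\frac{n}{2},\frac{n}{2}}$.
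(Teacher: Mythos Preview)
The paper does not prove this corollary; it is quoted from Bondy \cite{BONDY197180} purely as background and motivation, with no argument supplied. There is therefore nothing in the present paper to compare your proposal against. Your outline is, in broad strokes, the chord-descent argument Bondy himself used in the cited source, so in spirit you are reconstructing the original reference rather than anything this paper contributes.

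One substantive remark on your step (1): as the corollary is stated here, the only hypothesis is $|E(G)|\ge n^2/4$, with no Hamiltonicity or even connectivity assumed. That hypothesis alone does not yield a spanning cycle---for instance, $K_{n-1}$ together with an isolated vertex has $\binom{n-1}{2}\ge n^2/4$ edges for all $n\ge 6$, yet it has no $C_n$ and is not $K_{n/2,n/2}$. So your step (1) cannot be carried out from the stated hypotheses, and indeed the corollary as written in this paper is not literally true. Bondy's actual theorem assumes $G$ is Hamiltonian at the outset, which is precisely what anchors the descent at $\ell=n$; the present paper has simply elided that hypothesis when quoting the result.
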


\Cref{cor:panc} implies that we can use counting edges in a graph as a sufficient condition for demonstrating the pancyclicity of any graph.
R.~Matsubara, M.~Tsugaki, and T.~Yamashita proved a stronger theorem.
For any vertex $x$ in a graph, $N(x)$ represents the neighborhood set of $x$ and $d(x)$ represents the degree of $x$.

\begin{thm}[\cite{TYamashita}]\label{thm:tpanc}
  Let $G$ be a 2-connected graph of order $n\geq 6$. 
  Suppose that $|N(x)\cup N(y)|+d(z)\geq n$ for every triple independent vertices $x,y,z$ of $G$.
  Then $G$ is pancyclic or isomorphic to the complete bipartite graph $K_{\frac{n}{2},\frac{n}{2}}$.
\end{thm}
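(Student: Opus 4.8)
The plan is to reduce \Cref{thm:tpanc} to two sub-claims: (i) $G$ is Hamiltonian — the exceptional $K_{n/2,n/2}$ is itself Hamiltonian, so this costs nothing — and (ii) a Hamiltonian graph satisfying the hypothesis already contains a $C_k$ for every $k\in\{3,\dots,n\}$ unless it is isomorphic to $K_{n/2,n/2}$. First, observe that the hypothesis has content only when $G$ has an independent triple, i.e.\ when the independence number $\alpha(G)\ge 3$. If $\alpha(G)\le 2$ then the complement $\overline G$ is triangle-free, so $|E(G)|\ge\binom n2-\lfloor n^2/4\rfloor$, which is within $O(n)$ of the threshold in \Cref{cor:panc}; a short separate argument, using $2$-connectivity to rule out the sparse near-extremal exceptions, then gives that $G$ is pancyclic (note $K_{n/2,n/2}$ cannot occur here, having $\alpha=n/2\ge 3$). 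So from now on $\alpha(G)\ge 3$.

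First I would prove that $G$ is Hamiltonian. The bound $|N(x)\cup N(y)|+d(z)\ge n$ over independent triples $\{x,y,z\}$ is a Fan-type neighbourhood-union-plus-degree condition, and I would obtain Hamiltonicity by the standard longest-cycle argument: take a longest cycle $C$; if some $u\notin V(C)$ exists, use $2$-connectivity to find two internally disjoint paths from $u$ to $C$ ending at distinct vertices, form an independent triple from $u$ together with two suitably chosen neighbours on $C$ of those attachment points, and extract from the hypothesis enough common structure to reroute $C$ through $u$, contradicting maximality. (Alternatively, one quotes the matching Hamiltonicity theorem under this hypothesis.) What remains is the implication: $G$ Hamiltonian and satisfying the hypothesis $\Longrightarrow$ $G$ is pancyclic or $G\cong K_{n/2,n/2}$.

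Fix a Hamilton cycle $C=v_1v_2\cdots v_nv_1$ and assume for contradiction that $G$ is neither pancyclic nor isomorphic to $K_{n/2,n/2}$. I would then run Bondy's chord-counting method — the mechanism behind \Cref{cor:panc} — sharpened in the spirit of the Schmeichel--Hakimi cycle-structure theorem for Hamiltonian graphs: show first that $G$ contains $C_{n-1}$ unless $G$ is bipartite, and that a bipartite $2$-connected Hamiltonian graph meeting the degree hypothesis must be the balanced $K_{n/2,n/2}$; then descend, producing $C_{n-2}$ from the coexistence of $C_n$ and $C_{n-1}$, then $C_{n-3}$, and so on down to $C_3$. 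At a stage where $G$ has no $C_\ell$, the potential endpoints on $C$ of ``length-$\ell$'' chords are pairwise non-adjacent, so feeding a well-chosen triple among them into $|N(x)\cup N(y)|+d(z)\ge n$ caps the number of missing chords and contradicts the absence of $C_\ell$ — except when the relevant inequalities are simultaneously tight, a rigidity that I would again show forces $G\cong K_{n/2,n/2}$.

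The hard part will be this rigidity analysis: establishing that the chord-count inequalities can be tight at all lengths $\ell$ at once only for the balanced complete bipartite graph, which requires careful bookkeeping of precisely which chords of $C$ occur and repeated use of $2$-connectivity to eliminate the ``almost bipartite'' near-extremal configurations. A secondary, fiddlier point is cleanly closing the $\alpha(G)\le 2$ boundary case, since the bare edge count from Mantel's theorem lands just below the $n^2/4$ threshold of \Cref{cor:panc}, so one really needs the $2$-connectivity refinement (or a stronger pancyclicity result for dense graphs) to bridge the gap.
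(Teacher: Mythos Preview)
This theorem is not proved in the paper at all: it is quoted verbatim from \cite{TYamashita} as an external result, used only to motivate why a new approach is needed (since \Cref{thm:tpanc} fails to apply to generalized Paley graphs). There is therefore no ``paper's own proof'' to compare your proposal against.

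Your sketch is a plausible outline of how the original Matsubara--Tsugaki--Yamashita argument (or something in its spirit) might go --- Hamiltonicity via a longest-cycle/neighbourhood-union argument, followed by a Bondy/Schmeichel--Hakimi style chord-counting descent --- but verifying its correctness would require checking it against \cite{TYamashita} itself, not against anything in this paper.
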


From the properties of the Paley graph, we can easily prove that 
 for any triple independent vertices $x,y,z$ in the Paley graph with order $q\geq 6$, 
 \[|N_G(x)\cup N_G(y)|+d_G(z)=\frac{5q-5}{4}\geq q.\]
Therefore, from \Cref{thm:tpanc} we demonstrate the pancyclicity of the Paley graph.

However, for the generalization of Paley graph, known as generalized Paley graph, \Cref{thm:tpanc} cannot be applied.
\begin{df}[Generalized Paley graph\cite{10.1307/mmj/1242071694}]\label{def:gPaley}
  Let $(\FF_q^*)^k$ be the set of $k$-th powers in $\FF_q^*$, and 
  let $k$ and $q$ be integers that satisfy the conditions that $(\FF_q^*)^k$ is closed under the taking of inverses and generates $\FF_q$.
  The generalized Paley graph is defined as $Cay(\FF_q,(\FF_q^*)^k)$.
\end{df}
In this case, if $k\geq 3$ then for any triple independent vertices $x,y,z$ in the generalized Paley graph, $Cay(\FF_q,(\FF_q^*)^k)$,
\[|N_G(x)\cup N_G(y)|+d_G(z)\leq \frac{q-1}{3}+\frac{q-1}{3}+\frac{q-1}{3}< q.\]
Therefore, we cannot establish the pancyclicity of the generalized Paley graph using \Cref{thm:tpanc}.
The pancyclicity of the generalized Paley graph is an unsolved problem.

We aim to prove the pancyclicity of the generalized Paley graph.
In this paper, we show the pancyclicity of Paley graphs $P(q)$, where $q\neq 5$ using a new method.
Furthermore, we also determine the Paley index of the $n$-cycle from the pancyclicity.
In \cite{next}, we apply this new method to the generalized Paley graph and establish its pancyclicity.

This new method is centered around the following two theorems.
Note that the definitions of several terms in \Cref{thm:carPanc} will be given in \Cref{sec:pre}.

\begin{thm}\label{thm:cayPanc}
  Let $G=Cay(\ZZ/m\ZZ,S)$, where $S$ contains the generator of $\ZZ/m\ZZ$.
  If any pair of vertices in $G$ has at least $1$ common neighbor, 
  and if there exists at least $1$ pair of vertices with at least $2$ common neighbors, then $G$ is pancyclic.
\end{thm}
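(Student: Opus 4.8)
The plan is to encode a $k$-cycle in $G=Cay(\ZZ/m\ZZ,S)$ by its \emph{step sequence}: a tuple $(s_1,\dots,s_k)\in S^{k}$ with $s_1+\cdots+s_k\equiv 0\pmod m$ whose partial sums $0,\,s_1,\,s_1+s_2,\,\dots,\,s_1+\cdots+s_{k-1}$ are pairwise distinct. Such a tuple traces a closed walk of length $k$ through $k$ distinct vertices, i.e.\ a $C_k$, and every $C_k$ arises this way; so it suffices to exhibit, for each $k\in\{3,\dots,m\}$, one such tuple. Throughout we use that $1\in S$, hence $-1\in S$.

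The backbone is the following. Fix $k$ and put $d=k-2$, $d'=m-k+2$ (both in $\{1,\dots,m-2\}$). If $\{0,d\}$ has a common neighbour $v$ with $d<v\le m-1$, then $v\in S$ and $d-v\in S$, and the tuple $(v,\ d-v,\ \underbrace{-1,\dots,-1}_{d})$ has partial sums $0,v,d,d-1,\dots,1$, which are pairwise distinct because $v\notin\{0,1,\dots,d\}$; this is a $C_{d+2}=C_k$. Dually, if $\{0,d'\}$ has a common neighbour $v$ with $1\le v\le d'-1$, the tuple $(v,\ d'-v,\ \underbrace{1,\dots,1}_{d})$ has partial sums $0,v,d',d'+1,\dots,m-1$, again distinct, giving $C_{m-d'+2}=C_k$. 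Since every pair has a common neighbour --- and since for $k=3$ and $k=m$ the relevant common neighbour automatically lies in the admissible interval --- this immediately yields $C_3$, $C_m$, and every $C_k$ with $5\le k\le m-1$ for which $\{0,k-2\}$ has a common neighbour in $\{k-1,\dots,m-1\}$.

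The crux is the complementary situation: all common neighbours of $\{0,d\}$ (with $d=k-2$) lie in $\{1,\dots,d-1\}$. Here the second hypothesis is what I expect to do the real work. For $k=4$ it is used directly: if $\{0,c\}$ is the pair with two common neighbours $u\neq w$, then $(u,\ c-u,\ w-c,\ -w)$ has distinct partial sums $0,u,c,w$, so $C_4$ exists. For the remaining $k$ one still has a common neighbour $v_0\in\{1,\dots,d-1\}$, whence $d=v_0+(d-v_0)$ with $v_0,\,d-v_0\in S$, so $S$ contains a genuine chord; and one still has the two-common-neighbour pair, which may be translated to sit alongside the run being used. The plan is to build $C_k$ by splicing a block of consecutive vertices to either a chord jump (of width in $S$) or the two-step detour supplied by the distinguished pair, tuning the splice so that exactly $k$ vertices appear, and then verifying the distinct-partial-sums condition move by move.

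The main obstacle is this last step. The lengths $k$ that escape the backbone construction are controlled by where the elements of $S$ sit, and no single recipe covers them all; one needs a finite case analysis --- on the position of the distinguished pair relative to $0$ and $k-2$, and on which small residues belong to $S$ --- to show that the available chords, the $\pm1$-runs coming from $1,-1\in S$, and the extra common neighbour can always be combined into a length-$k$ step sequence without repeating a vertex. I expect carrying out this bookkeeping, rather than any single clever cycle, to be where the proof is actually won.
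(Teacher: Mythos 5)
Your backbone coincides with the easy half of the paper's argument (the paper fixes the pair $\{0,n-2\}$, takes a common neighbour $\alpha$, and when $\alpha>n-2$ builds exactly your cycle $0,\alpha,n-2,n-3,\dots,1,0$; your ``dual'' recipe and your $C_4$ from the two-common-neighbour pair are fine, and $C_m$ is in any case trivial from $1\in S$). But the case you yourself identify as the crux --- every common neighbour $\alpha$ of $\{0,k-2\}$ lying strictly between $0$ and $k-2$ --- is exactly where your write-up stops being a proof: you offer only a plan (``splicing a block \ldots tuning the splice \ldots one needs a finite case analysis'') and no concrete step sequence, no statement of the cases, and no verification of distinctness. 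As it stands the theorem is not proved for those lengths $k$, and nothing in your text guarantees the anticipated bookkeeping can be closed; in particular your dual recipe concerns the \emph{different} pair $\{0,m-k+2\}$, so there is no reason either backbone applies.

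Moreover, your expectation that ``no single recipe covers them all'' is not borne out: the paper handles the whole range $1<\alpha<n-2$ with one explicit cycle, needing neither the second hypothesis nor any case analysis on $S$. In your step-sequence language, take
\[
(\alpha,\ \underbrace{-1,\dots,-1}_{\alpha-2},\ \alpha,\ \underbrace{1,\dots,1}_{n-3-\alpha},\ -(n-2-\alpha),\ -\alpha,\ -1),
\]
i.e.\ the cycle $0,\alpha,\alpha-1,\dots,2,\alpha+2,\alpha+3,\dots,n-1,\alpha+1,1,0$ on the vertex set $\{0,1,\dots,n-1\}$: every step has absolute value $1$, $\alpha$, or $n-2-\alpha$, and these all lie in $S$ because $1\in S$ and $\alpha$ is adjacent to both $0$ and $n-2$; the $n$ vertices are visited exactly once. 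The leftover case $\alpha=1$ is disposed of by observing that $n-3$ is then also a common neighbour of $0$ and $n-2$ (since $n-3\in S$ and $1\in S$), which re-enters the previous construction. Supplying this (or an equivalent) explicit construction is precisely what your proposal is missing, and it is the substantive content of the theorem.
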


\begin{thm}\label{thm:carPanc}
  Let $G_1$ be pancyclic and with odd order, and let $G_2$ be a semi-Hamiltonian graph. 
  Then the Cartesian product of these graphs, $G_1\times G_2$, is pancyclic.
\end{thm}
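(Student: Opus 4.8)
The plan is to push everything down to one clean statement about the Cartesian product of an \emph{odd} cycle with a path; this is exactly where the hypothesis ``$G_1$ has odd order'' is used. Write $n_1$ for the (odd) order of $G_1$, write $n_2$ for the order of $G_2$, and let $P_m$ be the path on $m$ vertices. We must find a cycle of every length $\ell\in\{3,\dots,n_1n_2\}$ in $G=G_1\times G_2$. When $3\le\ell\le n_1$ this is immediate: $G_1\times\{v\}\cong G_1$ is a subgraph of $G$ for any vertex $v$ of $G_2$, and $G_1$ is pancyclic. When $n_1<\ell\le n_1n_2$ I would use that $G_1$, being pancyclic, contains $C_{n_1}$ (a Hamiltonian cycle), and that $G_2$, being semi-Hamiltonian, contains $P_{n_2}$ (a Hamiltonian path); the Cartesian product is monotone under subgraphs, so $G$ contains $C_{n_1}\times P_{n_2}$. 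Hence it suffices to prove: \emph{for odd $n\ge3$ and every $k\ge1$, $C_n\times P_k$ contains a cycle of each length $\ell$ with $n\le\ell\le nk$} (applied with $n=n_1$, $k=n_2$ this, together with the range $[3,n_1]$ above, covers $[3,n_1n_2]$). Note that oddness is genuinely needed: if $n$ were even then $C_n\times P_k$ would be bipartite, hence contain no odd cycle at all, and this subgraph would be useless for odd target lengths.

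I would prove this claim by induction on $k$, labelling the $k$ copies of $C_n$ inside $C_n\times P_k$ as ``levels'' $1,\dots,k$. The case $k=1$ is trivial, and for $k\ge2$ the subgraph $C_n\times P_{k-1}$ already supplies the lengths in $[n,n(k-1)]$, so only $\ell\in[n(k-1)+1,nk]$ remains. The basic move is a \emph{bump}: if a cycle uses an edge of level $k-1$, say $(i,k-1)(i{+}1,k-1)$, reroute it through the (new) level $k$ via the detour $(i,k-1)\,(i,k)\,(i{+}1,k)\,(i{+}1,k-1)$; the result is again a single cycle, longer by exactly $2$, and bumps on pairwise non-adjacent edges of level $k-1$ act independently, so they raise the length by any even amount up to $n-1$. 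I would carry through the induction a strengthened hypothesis: that for the two \emph{largest} lengths $n(k-1)$ and $n(k-1)-1$ one can choose the cycle in $C_n\times P_{k-1}$ so that it contains at least $(n-1)/2$ pairwise non-adjacent edges within level $k-1$. Bumping each of these two ``seeds'' then produces two families of cycles in $C_n\times P_k$ whose lengths together fill $[\,n(k-1)-1,\ nk-1\,]$ (the two seeds have opposite parities, so the families interleave), and a direct construction of a Hamiltonian cycle of $C_n\times P_k$ that traverses level $k$ as a Hamiltonian path of $C_n$ supplies the last length $nk$ and also reinstates the strengthened hypothesis at level $k$. The base case $k=2$ (the circular ladder over an odd cycle) is handled by hand: the copy of $C_n$ in level $1$ gives length $n$; ``rectangular'' cycles $(a_1,1)\cdots(a_t,1)(a_t,2)\cdots(a_1,2)(a_1,1)$, where $a_1\cdots a_t$ is an arc of $C_n$ (a Hamiltonian path of $C_n$ when $t=n$), give all even lengths $2t$ with $2\le t\le n$; and the odd lengths strictly between $n$ and $2n$ come from bumping the length-$n$ cycle.

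The points I expect to be delicate are: (i) pinning down the strengthened induction hypothesis so that it truly propagates through the step --- one needs the bumped output cycles of lengths $nk$ and $nk-1$ to inherit enough pairwise non-adjacent top-level edges; (ii) the small auxiliary facts about Hamiltonian (and near-Hamiltonian) paths of $C_n\times P_{m}$ with both endpoints prescribed on a single level, needed for the seed constructions --- trivial for $m=1$, but for $m\ge2$ a Hamiltonian-connectivity-type statement (true because $n$ is odd) that should be folded into the same induction; and (iii) the parity-and-range bookkeeping that guarantees the two bumped families plus the Hamiltonian cycle leave no gap in $[n(k-1)+1,nk]$. The main obstacle is really (i)--(ii) together: making the induction self-sustaining. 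Once this claim is proved, the theorem is immediate from the reduction in the first paragraph.
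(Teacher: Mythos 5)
Your proposal is correct, and its top layer is the same reduction the paper uses: cycles of length $3,\dots,n_1$ come from a copy of the pancyclic $G_1$, and all longer lengths are found inside $C_{n_1}\times P_{n_2}\subset G_1\times G_2$, so everything rests on the claim that $C_n\times P_k$ ($n$ odd) contains cycles of every length in $[n,nk]$ --- which is exactly the paper's \Cref{lem:carPartPanc} (packaged as \Cref{cor:2}). Where you differ is in how that lemma is proved. The paper gives a direct, explicit construction: a boustrophedon Hamiltonian cycle restricted to the first $k$ levels yields the lengths $k(2m+1)$, and two families of local modifications (choosing the column $\alpha$ where the snake drops to the bottom row, or inserting a zig-zag in the top row), analyzed according to the parities of $x$ and $k$, fill in the lengths $k(2m+1)+x$. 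You instead induct on the number of levels, using the ``bump'' operation (rerouting a top-level edge through the new level, $+2$ to the length) applied to two seeds of consecutive lengths $n(k-1)$ and $n(k-1)-1$, with a strengthened hypothesis that these seeds carry $(n-1)/2$ pairwise non-adjacent top-level edges; your arithmetic checks out (the two bump families interleave to cover $[n(k-1)-1,nk-1]$, the snake-type Hamiltonian cycle gives $nk$), and the delicate points you flag do resolve: the length-$(nk-1)$ cycle with the required disjoint top edges is precisely the maximally bumped Hamiltonian seed, and the Hamiltonian cycle of $C_n\times P_k$ traversing level $k$ as a full path can be obtained by rerouting a single top-level edge of the level-$(k-1)$ Hamiltonian cycle around the new level, so no separate Hamiltonian-connectivity lemma is needed; your base case $k=2$ (rectangular cycles for even lengths, bumps of the bottom $C_n$ for odd lengths) also supplies the strengthened hypothesis. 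The trade-off: the paper's proof is fully explicit but requires careful coordinate and parity bookkeeping across several pictures, while your inductive bumping argument localizes the use of oddness (parity of the two seeds) and avoids global coordinates, at the cost of having to carry the strengthened hypothesis through the induction.
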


\Cref{thm:cayPanc} implies that counting the common neighbors of any $2$ vertices in a graph can serve as a sufficient condition for demonstrating the pancyclicity of specific graphs.
Since the Paley graph $P(q)$, where $q$ is a prime number not equal to $5$, satisfies the conditions of \Cref{thm:cayPanc}, we have the following corollary.

\begin{cor}\label{cor:paPanc}
  If $q$ is a prime number not equal to $5$, then $P(q)$ is pancyclic.
\end{cor}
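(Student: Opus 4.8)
The plan is to deduce the corollary directly from \Cref{thm:cayPanc} by verifying its three hypotheses for $P(q)$ when $q$ is prime (necessarily $q\equiv 1\pmod 4$, since otherwise $P(q)$ is not defined) and $q\neq 5$. First I would observe that because $q$ is prime we may identify $\FF_q$ with $\ZZ/q\ZZ$, so $P(q)=Cay(\ZZ/q\ZZ,(\FF_q^*)^2)$; since $1=1^2\in(\FF_q^*)^2$ and $1$ generates $\ZZ/q\ZZ$, the connection set $S=(\FF_q^*)^2$ contains a generator of $\ZZ/q\ZZ$, which is the first hypothesis. It then remains only to count common neighbors.

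The second step is to compute, for distinct vertices $a,b$, the size of $N(a)\cap N(b)$, i.e.\ the number of $x\in\FF_q$ with $x-a\in(\FF_q^*)^2$ and $x-b\in(\FF_q^*)^2$ (automatically $x\neq a,b$, as $0\notin(\FF_q^*)^2$). Writing $\chi$ for the quadratic character of $\FF_q$ and using that $t\mapsto\tfrac12(1+\chi(t))$ is the indicator of nonzero squares, this count equals $\tfrac14\sum_{x\neq a,b}(1+\chi(x-a))(1+\chi(x-b))$. Expanding and applying $\sum_{t}\chi(t)=0$, the classical identity $\sum_{t\in\FF_q}\chi(t)\chi(t+c)=-1$ for $c\neq0$, and $\chi(-1)=1$ (valid since $q\equiv1\pmod4$), one obtains $|N(a)\cap N(b)|=\tfrac14\bigl(q-3-2\chi(b-a)\bigr)$, which is $\tfrac{q-5}{4}$ when $a\sim b$ and $\tfrac{q-1}{4}$ when $a\not\sim b$. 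Equivalently, one may simply quote that $P(q)$ is strongly regular with parameters $\bigl(q,\tfrac{q-1}{2},\tfrac{q-5}{4},\tfrac{q-1}{4}\bigr)$.

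The third step is the numerical check. Since $q$ is a prime with $q\equiv1\pmod4$ and $q\neq5$, we have $q\geq13$, so $\tfrac{q-5}{4}\geq2$ and $\tfrac{q-1}{4}\geq3$. Hence every pair of vertices of $P(q)$ has at least one common neighbor, giving the second hypothesis. Moreover $P(q)$ is $\tfrac{q-1}{2}$-regular on $q$ vertices, hence not complete, so it has a pair of non-adjacent vertices, and such a pair shares $\tfrac{q-1}{4}\geq2$ common neighbors; this is the third hypothesis. \Cref{thm:cayPanc} then yields that $P(q)$ is pancyclic.

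I do not expect a genuine obstacle here: the only computational ingredient is the common-neighbor count, and that collapses to the standard evaluation $\sum_t\chi(t)\chi(t+c)=-1$ (or to a citation of the strong regularity of Paley graphs). The one point worth flagging is why $q=5$ must be excluded: there $\tfrac{q-5}{4}=0$, so adjacent vertices of $P(5)=C_5$ have no common neighbor and \Cref{thm:cayPanc} does not apply — as it must not, since $C_5$ is not pancyclic. Note also that primality is used only to ensure $\FF_q=\ZZ/q\ZZ$; the prime-power case lies outside the scope of \Cref{thm:cayPanc} and is precisely where the Cartesian-product argument of \Cref{thm:carPanc} becomes necessary.
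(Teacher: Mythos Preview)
Your proposal is correct and follows essentially the same route as the paper: identify $\FF_q$ with $\ZZ/q\ZZ$, note $1\in S$, quote the strong regularity parameters $(q,\tfrac{q-1}{2},\tfrac{q-5}{4},\tfrac{q-1}{4})$, observe that $q\geq 13$ forces $\tfrac{q-5}{4}\geq 2$, and apply \Cref{thm:cayPanc}. The only difference is cosmetic: the paper simply says ``any two vertices have at least two common neighbors,'' which dispatches both the second and third hypotheses at once, whereas you take a slightly longer detour (finding a non-adjacent pair) for the third hypothesis when any adjacent pair already suffices.
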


On the other hand, \Cref{thm:carPanc} indirectly provides a proof for the pancyclicity of $P(q)$, where $q$ is not a prime number.
Using \Cref{cor:paPanc} and \Cref{thm:carPanc}, we claim to be able to show the pancyclicity of Paley graphs $P(q)$ for $q\neq 5$, and to determine the Paley index of $C_n$. 

\begin{cor}\label{thm:main}
  Any Paley graph without $P(5)$ is pancyclic.
  \end{cor}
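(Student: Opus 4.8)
The plan is to argue by cases on the factorisation $q=p^k$, noting that $q\equiv 1\pmod 4$ forces $p$ odd and that, when $k$ is odd, $q\equiv 1\pmod 4$ forces $p\equiv 1\pmod 4$. If $k=1$ then $q$ is a prime other than $5$ and $P(q)$ is pancyclic by \Cref{cor:paPanc}. For every case with $k\ge 2$ I would exhibit inside $P(q)$ a \emph{spanning} subgraph isomorphic to $G_1\times G_2$ with $G_1$ pancyclic of odd order and $G_2$ a path (hence semi-Hamiltonian); \Cref{thm:carPanc} then makes $G_1\times G_2$ pancyclic, and since a graph containing a spanning pancyclic subgraph is pancyclic (every cycle of the subgraph is a cycle of the host), $P(q)$ is pancyclic.

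The mechanism producing that spanning subgraph is uniform. Pick a proper $\FF_p$-subspace $A\le(\FF_q,+)$ with $|A|\ge 3$ (chosen per case below), and partition $\FF_q$ into the $q/|A|$ cosets of $A$; by translation-invariance each coset induces the same subgraph $G_1:=P(q)[A]$, which has odd order $|A|$. For distinct cosets $C,C'$, a vertex of $C$ is $P(q)$-adjacent to precisely the vertices of $C'$ whose difference from it is a square lying in the coset $C-C'$, and the number of such squares depends only on $C-C'$; thus the bipartite graph between $C$ and $C'$ is regular, with equal side-degrees because $-1\in(\FF_q^*)^2$, so whenever that degree is positive it has a perfect matching. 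One also needs the ``quotient graph'' on the cosets — two joined when their difference contains a square — to have a Hamiltonian path; it is a Cayley graph on the abelian group $\FF_q/A$, and it is connected because the $(q-1)/2$ nonzero non-squares can be covered by fewer than half of the cosets, so the ``good'' cosets are not confined to a proper subgroup and hence generate $\FF_q/A$ (the orders $3$ and $5$ needing only a one-line check). A connected Cayley graph on a finite abelian group has a Hamiltonian path $C_1,C_2,\dots$; relabelling each $C_{i+1}$ by carrying over the labels of $C_i$ along a perfect matching exhibits $G_1\times(\text{path})$ as a spanning subgraph of $P(q)$.

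It then remains to choose $A$ so that $G_1=P(q)[A]$ is pancyclic. If $k$ is even, take $A=\FF_{p^{k/2}}$: since $(p^k-1)/(p^{k/2}-1)=p^{k/2}+1$ is even, $\FF_{p^{k/2}}^*\subseteq(\FF_{p^k}^*)^2$, so $G_1=K_{p^{k/2}}$ is complete, hence pancyclic. If $k$ is odd and $p\ne 5$ (so $p\equiv 1\pmod 4$), take $A=\FF_p x$ a line: because $k$ is odd, the quadratic character of $\FF_q$ restricts non-trivially to $\FF_p^*$, hence to the quadratic character of $\FF_p$, so after identifying $A$ with $\FF_p$ and, if needed, rescaling by a non-square, the connection set of $G_1$ becomes $(\FF_p^*)^2$ and $G_1\cong P(p)$, pancyclic by \Cref{cor:paPanc} since $p\ne 5$.

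The remaining family, $q=5^k$ with $k$ odd and $k\ge 3$, is the main obstacle: every line induces $C_5$, and the only proper subfield $\FF_5$ also induces $C_5$, so no subfield or line yields a pancyclic factor. Here I would take $A$ a $2$-dimensional $\FF_5$-subspace; since $2$ is a non-square in $\FF_{5^k}$ for $k$ odd, the quadratic character runs $+,-,-,+$ along each orbit $\{x,2x,3x,4x\}$, so each of the six directions of $A$ puts exactly two of its nonzero vectors into the connection set and $G_1$ is a fixed $12$-regular graph on $25$ vertices. The crux is to show this one small graph is pancyclic. A clean route: any two non-adjacent vertices have at most $11$ common neighbours (having $12$ would force the $12$-element connection set to be a union of cosets of a subgroup of order $5$, which is impossible), so $|N(x)\cup N(y)|+d(z)\ge 13+12=25$ for every independent triple; the graph is $2$-connected, of order $25\ge 6$, and not $K_{n/2,n/2}$ (as $n=25$ is odd), so \Cref{thm:tpanc} gives pancyclicity — or one can simply list cycles of all lengths $3,\dots,25$ by hand. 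Granting this, \Cref{thm:carPanc} disposes of $q=5^k$, and the four cases together cover all admissible $q\ne 5$. Everything outside this last $25$-vertex verification — the coset/matching bookkeeping, the connectivity of the quotient graph, and the character computations — is routine.
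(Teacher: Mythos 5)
Your route is genuinely different from the paper's and, in outline, viable, but one step is wrong as written: transporting labels across consecutive cosets along an \emph{arbitrary} perfect matching does not exhibit $G_1\times(\text{path})$ inside $P(q)$. For the Cartesian product you need, in every layer $C_{i+1}$, the induced copy of $G_1$ to be consistent with the transported labels, i.e.\ the matching must map the induced subgraph on $C_i$ edge-by-edge onto the induced subgraph on $C_{i+1}$; a matching extracted from the regular bipartite graph via Hall/K\"onig has no reason to do this. The repair is immediate and already implicit in your setup: adjacency of $C_i$ and $C_{i+1}$ in the quotient means the coset $C_{i+1}-C_i$ contains a square $d$, and the translation $x\mapsto x+d$ is simultaneously a perfect matching and an adjacency-preserving bijection, so labels carry over consistently and every layer carries the same labelled copy of $G_1$. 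With that substitution the matching detour disappears and your construction does produce a spanning subgraph $G_1\times P_t$, to which \Cref{thm:carPanc} applies. Two further points to tighten: you invoke, without citation, the nontrivial external fact that a connected Cayley graph on a finite abelian group has a Hamiltonian path (the paper never needs this; see below), and in the $q=5^k$ case the assertions that the $25$-vertex graph is ``a fixed graph'' and is $2$-connected need justification — the first is unnecessary (your counting argument works for any admissible connection set $S$ with $|S|=12$, $S=-S$), and the second follows e.g.\ from vertex-transitivity, or from the observation that for nonadjacent $x,y$ with $v=y-x$ one has $S\cup(S+v)\subseteq A\setminus\{0,v\}$, so $|S\cap(S+v)|\geq 1$. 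Your key counting step ($|S\cap(S+v)|=12$ would force $5\mid 12$) and the application of \Cref{thm:tpanc} with $n=25$ odd are correct. (Minor: ``the only proper subfield is $\FF_5$'' fails for composite odd $k$, but this does not affect your construction.)

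Compared with the paper: the paper avoids all case analysis on $q=p^k$ by choosing a basis $f_0,\dots,f_{n-1}$ of $\FF_{q_0^n}$ consisting of squares, showing $G_{W_0}\times G_{W^*_{n-1}}$ is a spanning subgraph of $P(q)$ (the ``translation'' mechanism above, in coordinates), and proving that $G_{W^*_{n-1}}$ is semi-Hamiltonian by induction using \Cref{cor:2} — so no external Hamiltonicity theorem for abelian Cayley graphs is needed. The factor $G_{W_0}$ (a line) is shown to contain $P(q_0)$ or $K_{q_0}$ as a spanning subgraph according to $q_0\bmod 4$, which is the same dichotomy your cases 1--2 achieve via the subfield $\FF_{p^{k/2}}$ and the character-restriction argument. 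The largest divergence is $q=5^k$: the paper simply notes that the line is a $C_5$, applies \Cref{cor:2} to get all cycles of length $5$ through $5^n$, and obtains $C_3$ and $C_4$ from the strongly regular parameters — considerably lighter than your $2$-dimensional subspace plus \Cref{thm:tpanc} analysis, though yours is a legitimate alternative. In short: your approach buys a more ``structural'' decomposition (cosets of an arbitrary subspace plus a quotient Cayley graph) at the cost of an external Hamiltonian-path theorem and a harder $5^k$ case; the paper's basis-of-squares induction is more self-contained and uniform.
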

\begin{cor}\label{cor:index}
  Let $\rho_{C_n}$ be the Paley index of $C_n$, and let $\lceil n \rceil_{\FF}$ be
  \[
    \lceil n \rceil_{\FF}=\min \{q\in \NN\mid n\leq q\mbox{, where $q$ is a prime power and }q\equiv 1\pmod 4\}.
  \]
  Then, 
  \[
    \rho_{C_n}=
      \begin{cases*}
        \lceil n \rceil_{\FF} & $n\geq 5$\\
        9 & $n<5$
      \end{cases*}.
  \]
\end{cor}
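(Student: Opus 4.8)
The plan is to extract both inequalities $\rho_{C_n}\le\lceil n\rceil_{\FF}$ and $\rho_{C_n}\ge\lceil n\rceil_{\FF}$ almost for free from \Cref{thm:main}, after separating out the two genuinely small cases $n=3,4$, where the host Paley graph $P(5)=C_5$ is too sparse to carry a triangle or a $4$-cycle.

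First I would record the trivial lower bound. Since $P(q)$ has exactly $q$ vertices and is defined only when $q$ is a prime power with $q\equiv 1\pmod 4$, any $q$ for which $C_n$ embeds into $P(q)$ must satisfy both $q\ge n$ and $q\equiv 1\pmod 4$; hence $\rho_{C_n}\ge\lceil n\rceil_{\FF}$ for every $n\ge 3$. For the matching upper bound when $n\ge 5$, put $q=\lceil n\rceil_{\FF}$. If $q=5$, then necessarily $n=5$, and $P(5)=Cay(\ZZ/5\ZZ,\{\pm1\})=C_5$ visibly contains $C_5$. If $q\neq 5$, then \Cref{thm:main} says $P(q)$ is pancyclic, and since $3\le n\le q$ it contains a copy of $C_n$; thus $\rho_{C_n}\le q=\lceil n\rceil_{\FF}$. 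Combining the two bounds gives $\rho_{C_n}=\lceil n\rceil_{\FF}$ for all $n\ge 5$.

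Next I would handle $n\in\{3,4\}$, where $\lceil n\rceil_{\FF}=5$ but the claimed answer is $9$. The only prime power $q\equiv 1\pmod 4$ with $q<9$ is $q=5$, and $P(5)=C_5$ has girth $5$, so it contains neither a triangle nor a $4$-cycle; hence $\rho_{C_3},\rho_{C_4}\ge 9$. Conversely $9$ is a prime power, $9\equiv 1\pmod 4$, and $9\neq 5$, so \Cref{thm:main} makes $P(9)$ pancyclic; being of order $9$ it contains both $C_3$ and $C_4$, so $\rho_{C_3},\rho_{C_4}\le 9$. Therefore $\rho_{C_3}=\rho_{C_4}=9$, which finishes the corollary.

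I do not expect a real obstacle: the whole argument is bookkeeping on top of \Cref{thm:main}. The only points that need attention are (i) reading \Cref{def:PaleyInd} correctly, so that the minimum implicitly runs over prime powers $q\equiv 1\pmod 4$ and the lower bound is $\lceil n\rceil_{\FF}$ rather than merely $n$; and (ii) the boundary value $n=5$, at which $\lceil n\rceil_{\FF}=5$ forces us to verify $C_5\subseteq P(5)$ by hand, since \Cref{thm:main} explicitly excludes $P(5)$.
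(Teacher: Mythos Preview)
Your proposal is correct and follows essentially the same approach as the paper: both deduce the upper bound from \Cref{thm:main} (treating $n=5$ with $P(5)=C_5$ directly), and both handle $n\in\{3,4\}$ by noting that $P(5)=C_5$ has no $3$- or $4$-cycle while the next admissible Paley graph $P(9)$ is pancyclic. Your write-up is in fact slightly more careful than the paper's, since you spell out the trivial lower bound $\rho_{C_n}\ge\lceil n\rceil_{\FF}$ explicitly, whereas the paper leaves it implicit.
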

Note that \Cref{thm:cayPanc,thm:carPanc} are not derived by \Cref{cor:panc} or \Cref{thm:tpanc}.

This paper is organized as follows. 
In \Cref{sec:pre}, we give definitions of several terms in \Cref{thm:carPanc} and discuss a property of Paley graphs used in the proof of \Cref{cor:paPanc}.
In \Cref{sec:cay}, we give proofs of \Cref{thm:cayPanc} and \Cref{cor:paPanc}, which concern specific instances of \Cref{thm:main}.
In \Cref{sec:car}, we give a proof of \Cref{thm:carPanc}.
In \Cref{sec:pay}, we give proofs of \Cref{thm:main} and \Cref{cor:index}, using \Cref{cor:paPanc} and \Cref{thm:carPanc}.

\section{Preliminaries}\label{sec:pre}

In the following, $C_n$ denotes an $n$-cycle and $P_n$ denotes an $n$-path. 
$u\sim v$, where $u$ and $v$ are vertices in some graph $G$, signifies that $u$ and $v$ are adjacent in $G$.
If both $X$ and $G$ are graphs, then $X\subset G$ indicates that $X$ is a subgraph of $G$.

\subsection{Definitions of several terms involved in \Cref{thm:carPanc}}

We give the definitions of the Cartesian product of two graphs, the Hamiltonian graph, and the semi-Hamiltonian graph.

\begin{df}
Let $G_1$ and $G_2$ be graphs with vertices $V_1$ and $V_2$, respectively. 
The Cartesian product of $G_1$ and $G_2$ is defined by the vertex set $V_1\times V_2$, 
and the vertices $(u_1,u_2)$ and $(v_1,v_2)$ in $V_1\times V_2$ are adjacent if 
\[
  ((u_1 = v_1) \land (u_2\sim v_2))\lor ((u_1 \sim v_1) \land (u_2 = v_2)).
\]
\end{df}

In the following, we use $G_1\times G_2$ to denote a Cartesian product of two graphs, $G_1$ and $G_2$.

\begin{df}
Let $G$ be an undirected graph with order $v$. 
If $G$ has a subgraph isomorphic to $P_v$, then $G$ is called a semi-Hamiltonian graph, and this subgraph is called a Hamiltonian path.
Similarly, if $G$ has a subgraph isomorphic to $C_v$, then $G$ is called a Hamiltonian graph, and this subgraph is called a Hamiltonian cycle.
\end{df}

Note that from the definitions of a Hamiltonian graph and a pancyclic graph, a graph $G$ is a Hamiltonian graph if it is pancyclic.
In the same way, if a graph $G$ is a Hamiltonian graph, then it is also a semi-Hamiltonian graph.

\subsection{Useful property of Paley graphs}

A Paley graph is a strongly regular graph, a property that we make use of in this paper.
\begin{df}
  A simple graph $G$ is strongly regular if the number of common neighbors between any two vertices depends only on their adjacency.
  A strongly regular graph is characterized by the parameters $(v, k, \lambda, \mu)$, where
  \begin{itemize}
    \item $v$ represents the order of the graph
    \item $k$ represents the degree of each vertex
    \item $\lambda$ represents the number of common neighbors between any two adjacent vertices
    \item $\mu$ represents the number of common neighbors between any two non-adjacent vertices.
  \end{itemize}
\end{df}

A Paley graph $P(q)$ is a strongly regular graph with parameters $(q, \frac{q-1}{2}, \frac{q-5}{4}, \frac{q-1}{4})$~\cite{MR1829620}.

\section{Proofs of \Cref{thm:cayPanc} and \Cref{thm:main} for a specific situation}\label{sec:cay}

In this section, we first give a proof of \Cref{thm:cayPanc},
after which we give one for \Cref{cor:paPanc}, which represents a specific situation of \Cref{thm:main}. 

Note that Paley graph $P(13)$ is an example of a graph that satisfies the conditions of \Cref{thm:cayPanc}.

\begin{proof}[Proof of \Cref{thm:cayPanc}]
  By the definition of $S$, there exists some generator $x$ of $\ZZ/m\ZZ$ such that $x\in S$.
  First, we claim that the assumption $x=1$ can be made without loss of generality.
  If $x\neq 1$, we consider the automorphism $\sigma$ of $\ZZ/m\ZZ$, viewed as an additive group, defined as $\sigma(v)=vx^{-1}$, and let $G'=Cay(\ZZ/m\ZZ,\sigma(S))$.
  Note that $\sigma(S)$ satisfies the conditions of a Cayley graph since $\sigma$ is an automorphism.
  By the definition of a Cayley graph, $u\sim v$ if and only if $u-v\in S$, where $u,v$ is the vertex of $G$, 
  and $u'\sim v'$ if and only if $u'-v'\in \sigma(S)$, where $u',v'\in\sigma(S)$. 
 Because 
  \[
    u-v\in S\Longleftrightarrow\sigma(u-v)=\sigma(u)-\sigma(v)\in\sigma(S),
  \]
  the edge set of $G$ and the edge set of $G'$ are bijective. 
  This implies 
  \[
    Cay(\ZZ/m\ZZ,S)\simeq Cay(\ZZ/m\ZZ,\sigma(S)).
  \] 
  Therefore, if we prove $G'$ is pancyclic, then the theorem holds.
  Since $xx^{-1}=1\in \sigma(S)$, we can assume $x=1$ without loss of generality.
  In the following, we assume $1\in S$.
  
  From the conditions of the theorem, it is easy to see that $C_3,C_4\subset G$.
  Therefore, we focus on the existence of the subgraph $C_n$ in $G$ for all $5 \leq n\leq m$.
  
  We consider the common neighbor of vertices $0$ and $n-2$ in $G$.
  Based on the conditions of the theorem, we know that there is at least one vertex that is a common neighbor of them.
  Let us denote this vertex as $\alpha$. 
  This implies the existence of edges $\{n-2,\alpha\}$ and $\{0,\alpha\}$. 
  Furthermore, based on the given assumption about $S$, there exist edges $\{i,i+1\}$ in $G$, where $i\in \ZZ/m\ZZ$.
  
  If $\alpha > n-2$, we can construct a subgraph $H=(V,E)$ of $G$, where the vertex set $V$ and the edge set $E$ are defined as
\begin{align*}
  V&=\{0,1,2,\ldots,n-2,\alpha \},\\
  E&=\{\{0,1\},\{1,2\},\ldots,\{n-3,n-2\},\{n-2,\alpha\},\{0,\alpha\}\}.
\end{align*}
Obviously, $H$ is isomorphic to $C_n$. 
Therefore, $C_n\subset G$ in this case.
An example with $G=P(13)$, $n-2=5$, and $\alpha=9$ is illustrated in \Cref{figure:case1}.

Next, we consider the case when $1 < \alpha < n-2$.
According to the definition of a Cayley graph, we have $\alpha, \alpha - (n-2) \in S$, and there exist edges 
$\{0, \alpha\}, \{1, \alpha + 1\}, \{2, \alpha + 2\}$, and $\{\alpha + 1, n-1\}$.
Therefore, we can construct a subgraph $H=(V,E)$ of $G$, where the vertex set $V$ and the edge set $E$ are defined as
\begin{align*}
  V&=\{0,1,2,\ldots,n-2,n-1 \},\\
  E&=\left\{
  \begin{aligned}
    &\{0,\alpha \},\{\alpha -1,\alpha\},\{\alpha -2,\alpha -1\}\ldots,\{2,3\},\\
    &\{2,\alpha+2\},\{\alpha+2,\alpha+3\},\{\alpha +3,\alpha+4\},\ldots,\{n-2,n-1\},\\
    &\{\alpha +1,n-1\},\{1,\alpha+1\},\{0,1\}  
  \end{aligned}
  \right\}.
\end{align*}
This choice of edges is constructed in 5 steps:
\begin{itemize}
\item[1] We choose the edge $\{0,\alpha\}$.
\item[2] If $3\leq\alpha$ for all $i$ such that $3\leq i\leq \alpha $, then we choose the edges $\{i-1,i\}$. Otherwise, skip this step.
\item[3] We choose the edge $\{2,\alpha+2\}$.
\item[4] If $\alpha+2\leq n-2$ for all $i$ such that $\alpha+2\leq i\leq n-2$, then we choose the edges $\{i,i+1\}$. Otherwise, skip this step.
\item[5] Finally, we choose the edges $\{\alpha +1,n-1\}$,$\{1,\alpha+1\}$, and $\{0,1\}$.
\end{itemize}
The subgraph constructed in this way can be seen to traverse its vertices exactly once along its edges.
Therefore, $H$ is also isomorphic to $C_n$, and we can conclude that $C_n\subset G$.
An example illustrating this case is shown in \Cref{figure:case2}, where $G=P(13)$, $n-2=6$, and $\alpha=3$.
\begin{figure}[H]
  \begin{minipage}[b]{0.40\columnwidth}
  \centering
  \begin{tikzpicture}
    
    \graph[simple,nodes={draw, circle,minimum size=0.1cm},radius=2cm]{
      subgraph I_n[n=13, clockwise, V={0,1,2,3,4,5,6,7,8,9,10,11,12}];
      {
        [cycle, --]0,1,2,3,4,5,9
      }    
    };

    \foreach \i in {0, 1, 2, 3, 4, 5, 6, 7, 8, 9, 10, 11, 12}
    \node[anchor=center] at (\i) {$\i$};
  \end{tikzpicture}
  \subcaption{$n-2=5,\alpha=9$}\label{figure:case1}
  
\end{minipage}
\hspace{0.05\columnwidth}
\begin{minipage}[b]{0.40\columnwidth}
  \centering
  \begin{tikzpicture}
    
    \graph[simple,nodes={draw, circle,minimum size=0.1cm},radius=2cm]{
      subgraph I_n[n=13, clockwise, V={0,1,2,3,4,5,6,7,8,9,10,11,12}];
      {
        [cycle, --]0,3,2,5,6,7,4,1
      }
    };  

      \foreach \i in {0, 1, 2, 3, 4, 5, 6, 7, 8, 9, 10, 11, 12}
      \node[anchor=center] at (\i) {$\i$};
  \end{tikzpicture}
  
  \subcaption{$n-2=6,\alpha=3$}\label{figure:case2}
  \end{minipage}
  \caption{Example of cycle composition }
\end{figure}
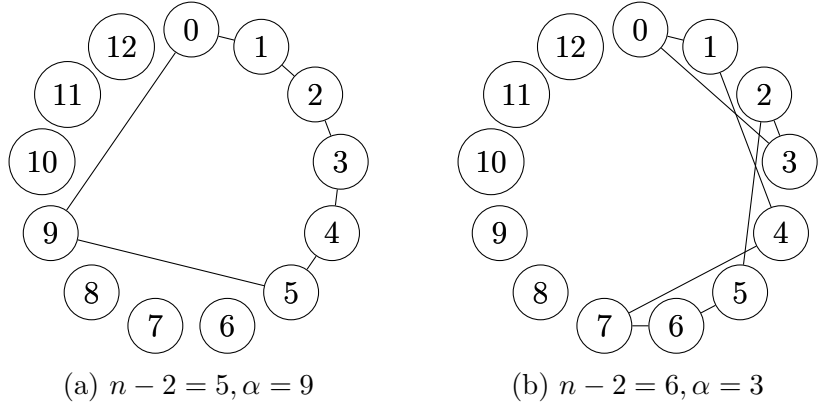

In the case where $\alpha = 1$, we have $-(\alpha - (n-2)) = n-3 \in (\mathbb{F}_q^*)^2$, which implies that $n-3$ is a neighbor of $0$.
Since $1\in S$, $n-3$ is also a neighbor of $n-2$.
Hence, we know $n-3$ is a common neighbor of $0$ and $n-2$.
We are considering the case where $n\geq 5$, so we know that $1 < n-3 < n-2$.
Therefore, if we consider $\alpha$ to be $n-3$, then this situation falls into the case $1 < \alpha < n-2$ that we previously discussed.
We showed in the earlier analysis that $C_n \subset G$ holds for all $\alpha$. 
Hence, the proof is complete.
\end{proof}

Using \Cref{thm:cayPanc}, we prove \Cref{cor:paPanc}.

\begin{proof}[Proof of \Cref{cor:paPanc}]
  A Paley graph $P(q)$ is defined as $Cay(\FF_q,(\FF_q)^2)$, where $q \equiv 1 \pmod{4}$.
  When $q$ is a prime number, we can regard $\FF_q$ as $\ZZ/q\ZZ$ and $(\FF_q)^2$ as a subset of $\ZZ/q\ZZ$.
  Therefore, we can also denote $P(q)$ as $Cay(\ZZ/q\ZZ,(\FF_q)^2)$.
  Since $\pm 1 \in (\FF_q)^2$, $P(q)$ is a Cayley graph that is made from a finite cyclic group and contains a generator.

  The Paley graph is a strongly regular graph, and its parameters are $(q,\frac{q-1}{2},\frac{q-5}{4},\frac{q-1}{4})$.
  The minimum prime number greater than 5 that leaves a remainder of 1 when divided by 4 is 13, so we have $\frac{q-5}{4}\geq 2$.
  This means that any two vertices have at least two common neighbors.

  Hence, we can apply \Cref{thm:cayPanc} to $P(q)$, and the proof is complete.

\end{proof}

\section{pancyclicity of the Cartesian product of specific two graphs}\label{sec:car}

In this section, we prove \Cref{thm:carPanc}.
To do so, we claim the following lemma.

\begin{lem}\label{lem:carPartPanc}
  Let $n$ and $m$ be positive integers.
  Then, $C_{2m+1}\times P_n$ is a Hamiltonian graph. 
  Additionally, for any integer $k$ such that $2m+1\leq k\leq n(2m+1)$, we have $C_k\subset C_{2m+1}\times P_n$.
\end{lem}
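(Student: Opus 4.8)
The plan is to build an explicit Hamiltonian cycle in $C_{2m+1}\times P_n$ by a "boustrophedon" (snake-like) traversal, and then to show that this Hamiltonian cycle can be shortened in a controlled way to produce cycles of every length between $2m+1$ and $n(2m+1)$. Write the vertices as pairs $(i,j)$ with $i\in\ZZ/(2m+1)\ZZ$ a vertex of $C_{2m+1}$ and $j\in\{1,2,\ldots,n\}$ a vertex of $P_n$, where $j\sim j+1$ in $P_n$. For each fixed $j$ the copy $\{(i,j):i\in\ZZ/(2m+1)\ZZ\}$ spans a cycle $C_{2m+1}$, and consecutive copies $j$, $j+1$ are joined by a perfect matching $(i,j)\sim(i,j+1)$.

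\textbf{Step 1: The Hamiltonian cycle.} I would first treat the base case $n=1$, where $C_{2m+1}\times P_1=C_{2m+1}$ is itself the Hamiltonian cycle. For $n\geq 2$, construct the Hamiltonian cycle as follows: start at $(0,1)$, traverse the path $(0,1),(1,1),\ldots,(2m,1)$ along the first copy (omitting the closing edge back to $(0,1)$), then step up via $(2m,1)\sim(2m,2)$; in the second copy traverse $(2m,2),(2m-1,2),\ldots,(0,2)$ in the reverse direction, then step up via $(0,2)\sim(0,3)$; continue alternating direction in each copy. Since $2m+1$ is odd, a short parity check shows that after traversing all $n$ copies the walk ends at a vertex lying in the same $C_{2m+1}$-fibre position ($i=0$ if $n$ is odd, $i=2m$ if $n$ is even) as is reachable back to $(0,1)$ — more precisely one should route the last fibre so as to terminate at $(1,n)$ or $(2m,n)$ as appropriate and close the cycle using a single rung edge together with one more fibre edge; I would just exhibit the cycle concretely and observe it is Hamiltonian. (This is the classical fact that $C_{\text{odd}}\times P_n$ is Hamiltonian, proved here by display rather than citation.)

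\textbf{Step 2: All intermediate cycle lengths.} For the second assertion I would induct on the number of copies used, or equivalently peel off copies one at a time. The key observation: for any $1\leq\ell\leq n$, the subgraph induced on $\{(i,j):1\leq j\leq\ell\}$ is $C_{2m+1}\times P_\ell$, which by Step 1 contains a Hamiltonian cycle of length $\ell(2m+1)$; this already gives all lengths of the form $\ell(2m+1)$. To fill the gaps, take the snake Hamiltonian cycle of $C_{2m+1}\times P_\ell$ and note that in its topmost copy (copy $\ell$) it uses a path through all $2m+1$ vertices of that fibre plus exactly one rung edge down to copy $\ell-1$ at each end — actually it enters and leaves copy $\ell$ through two rung edges and otherwise runs along the fibre. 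I can then "retract" the portion in copy $\ell$ vertex by vertex: replacing the long detour $\cdots(a,\ell-1),(a,\ell),(a\pm1,\ell),\ldots,(b,\ell),(b,\ell-1)\cdots$ by progressively shorter detours that use only $t$ of the fibre vertices ($t=0,1,\ldots,2m+1$), each such modification again yielding a cycle, and decreasing the length by one each time. Combined with the Hamiltonian cycles of the smaller products $C_{2m+1}\times P_{\ell-1}$, this yields every length from $(\ell-1)(2m+1)$ up to $\ell(2m+1)$; ranging $\ell$ from $1$ to $n$ covers $[2m+1,\,n(2m+1)]$.

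\textbf{Main obstacle.} The real work is bookkeeping in Step 2: making the "retraction" of the top fibre precise so that each intermediate object is genuinely a cycle (connected, 2-regular) and the length drops by exactly one at each step, including the boundary transitions where the top fibre is used $0$ or $1$ times and one must hand off cleanly to the Hamiltonian cycle of $C_{2m+1}\times P_{\ell-1}$. I would handle this by fixing once and for all an explicit description of the snake cycle's restriction to the last two copies and describing the family of cycles by an explicit parameter $t$ counting used top-fibre vertices, then checking the edge set is a single cycle for each $t$ — a routine but careful verification. The oddness of $2m+1$ is used only to make the snake close up (Step 1); it plays no role in Step 2.
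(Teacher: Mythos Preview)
There are two genuine gaps, and your diagnosis of where oddness matters is inverted.

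In Step 1, the snake you describe does not close for $n\geq 3$: after traversing all $n$ fibres you terminate at $(0,n)$ or $(2m,n)$, neither of which is adjacent to $(0,1)$, and every other vertex is already used, so ``a single rung edge together with one more fibre edge'' cannot reach $(0,1)$. The standard fix (which the paper uses) is to reserve one $P_n$-column as a return lane: walk $(0,0)\to(0,n-1)$ up column $0$, then snake row-by-row through columns $1,\ldots,2m$; the walk ends at $(1,0)$ or $(2m,0)$, each adjacent to $(0,0)$. This construction works for $C_r\times P_n$ with \emph{any} $r\geq 3$, so oddness is not what makes Step 1 go through.

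The more serious gap is Step 2. In the Hamiltonian cycle the top copy is entered at some $(a,\ell)$ and exited at some $(b,\ell)$, and the portion inside that copy is an $a$--$b$ path in $C_{2m+1}$. But a cycle has exactly \emph{two} paths between any pair of vertices, so with $a,b$ fixed you get only two values of $t$, not all of $0,1,\ldots,2m+1$. To change $t$ by one you must also slide the exit point $b$, which forces a reroute in copy $\ell-1$; you have not specified this, and the obvious local moves (short detours, rung chords) each change the cycle length by an \emph{even} amount. This parity obstruction is exactly where the hypothesis $2m+1$ odd is needed --- if the cycle factor were even the product would be bipartite and odd-length $C_k$ could not exist --- so your closing sentence has it backwards. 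The paper resolves this with two separate one-parameter families: for even offset $x$ it starts from the $k$-row Hamiltonian cycle and adds $2\alpha$ vertices by a zigzag into row $k$; for odd offset $x$ it starts from the $(k+1)$-row Hamiltonian cycle and short-circuits the bottom two rows to remove an even number of vertices. Because $k(2m+1)$ and $(k+1)(2m+1)$ have opposite parities, these even modifications together hit every length.
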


\begin{proof}
  By the definition of a Hamiltonian graph, 
  if we prove $C_k\subset C_{2m+1}\times P_n$, where $k$ satisfies the conditions of the lemma, then the proof is complete.
  Let $V_C=\{0,1,\ldots,2m\}$ be the vertices of $C_{2m+1}$, and let $V_P=\{0,1,\ldots,n-1\}$ be the vertices of $P_n$.
By the definition of the Cartesian product, we can regard the vertices of $C_{2m+1}\times P_n$ as $(V_C,V_P)$.
The structure of $C_{2m+1}\times P_n$ is outlined in \Cref{fig:outline}.

  \begin{figure}[H]
    \centering

      \begin{tikzpicture}[scale=0.6]
        \draw[thick] (0,0) grid (2.2,2.2);
        \draw[thick] (0,2.8) grid (2.2,5);
        \draw[thick] (2.8,0) grid (5,2.2);
        \draw[thick] (2.8,2.8) grid (5,5);

        \draw[thick, bend left=30](0,0) to (5,0);
        \draw[thick, bend left=30](0,1) to (5,1);
        \draw[thick, bend left=30](0,2) to (5,2);
        \draw[thick, bend left=30](0,3) to (5,3);
        \draw[thick, bend left=30](0,4) to (5,4);
        \draw[thick, bend left=30](0,5) to (5,5);
      
        \node[scale=0.5] ('0') at (-0.25,-0.25) {$0$};
        \node[scale=0.5] ('1') at (1,-0.5) {$1$};
        \node[scale=0.5] ('2') at (2,-0.5) {$2$};
        \node[scale=0.5] ('2m-2') at (3,-0.5) {};
        \node[scale=0.5] ('2m-1') at (4,-0.5) {$2m-1$};
        \node[scale=0.5] ('2m') at (5,-0.5) {$2m$};
      
        \node[scale=0.5] ('v1') at (-0.5,1) {$1$};
        \node[scale=0.5] ('v2') at (-0.5,2) {$2$};
        \node[scale=0.5] ('vn-3') at (-0.5,3) {};
        \node[scale=0.5] ('vn-2') at (-0.5,4) {$n-2$};
        \node[scale=0.5] ('vn-1') at (-0.5,5) {$n-1$};
      
        \draw[dotted](0,2) to (0,3);
        \draw[dotted](1,2) to (1,3);
        \draw[dotted](2,2) to (2,3);
        \draw[dotted](3,2) to (3,3);
        \draw[dotted](4,2) to (4,3);
        \draw[dotted](5,2) to (5,3);
        \draw[dotted](2,0) to (3,0);
        \draw[dotted](2,1) to (3,1);
        \draw[dotted](2,2) to (3,2);
        \draw[dotted](2,3) to (3,3);
        \draw[dotted](2,4) to (3,4);
        \draw[dotted](2,5) to (3,5);
      \end{tikzpicture}
      \caption{Outline of $C_{2m+1}\times P_n$}\label{fig:outline}
  \end{figure}
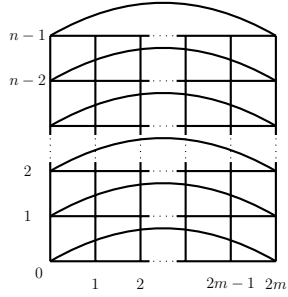
  Let $G=(V_G,E_G)$ be a subgraph of $C_{2m+1}\times P_n$, where $V_G$ is the vertex set and $E_G$ is the edge set.
We will form $C_k$ by carefully choosing vertices and edges from $C_{2m+1}\times P_n$ to construct a subgraph $G$.
First, if $n$ is even, we select $V_G$ and $E_G$ as follows.
  \begin{align*}
    V_G&=(V_P,V_C)\\
    E_G&=\left\{
      \begin{aligned}
        &\{(0,0),(0,1)\},\{(0,1),(0,2)\},\ldots,\{(0,n-2),(0,n-1)\}\\
    &\{(0,n-1),(1,n-1)\},\{(1,n-1),(2,n-1)\},\ldots,\{(2m-1,n-1),(2m,n-1)\}\\
    &\{(2m,n-1),(2m,n-2)\},\{(2m,n-2),(2m-1,n-2)\},\ldots,\{(2,n-2),(1,n-2)\}\\
    &\{(1,n-2),(1,n-3)\},\{(1,n-3),(2,n-3)\},\ldots,\{(2m-1,n-3),(2m,n-3)\}\\
    &\vdots \\
    &\{(1,2),(1,1)\},\{(1,1),(2,1)\},\ldots,\{(2m-1,1),(2m,1)\}\\
    &\{(2m,1),(2m,0)\},\{(2m,0),(2m-1,0)\},\ldots,\{(1,0),(0,0)\}\\
      \end{aligned}
    \right\}.
  \end{align*}
  This choice is represented by the bold lines in \Cref{fig:evenout}.
Similarly, if $n$ is odd, we can choose the edges in the following way.
  \begin{align*}
    V_G&=(V_P,V_C),\\
    E_G&=\left\{
      \begin{aligned}
        &\{(0,0),(0,1)\},\{(0,1),(0,2)\},\ldots,\{(0,n-2),(0,n-1)\}\\
    &\{(0,n-1),(1,n-1)\},\{(1,n-1),(2,n-1)\},\ldots,\{(2m-1,n-1),(2m,n-1)\}\\
    &\{(2m,n-1),(2m,n-2)\},\{(2m,n-2),(2m-1,n-2)\},\ldots,\{(2,n-2),(1,n-2)\}\\
    &\{(1,n-2),(1,n-3)\},\{(1,n-3),(2,n-3)\},\ldots,\{(2m-1,n-3),(2m,n-3)\}\\
    &\vdots \\
    &\{(2m,2),(2m,1)\},\{(2m,1),(2m-1,1)\},\ldots,\{(2,1),(1,1)\}\\
    &\{(1,1),(1,0)\},\{(1,0),(2,0)\},\ldots,\{(2m,0),(0,0)\}\\
      \end{aligned}
    \right\}.
  \end{align*}
  This choice is also indicated in \Cref{fig:oddout}. 
  \begin{figure}[H]
    \begin{minipage}[b]{0.4\textwidth}
      \centering
      \begin{tikzpicture}[scale=0.6]
        \draw[help lines] (0,0) grid (2.2,2.2);
        \draw[help lines] (0,2.8) grid (2.2,5);
        \draw[help lines] (2.8,0) grid (5,2.2);
        \draw[help lines] (2.8,2.8) grid (5,5);
        
        \node[scale=0.5] ('0') at (-0.25,-0.25) {$0$};
        \node[scale=0.5] ('1') at (1,-0.5) {$1$};
        \node[scale=0.5] ('2') at (2,-0.5) {$2$};
        \node[scale=0.5] ('2m-2') at (3,-0.5) {};
        \node[scale=0.5] ('2m-1') at (4,-0.5) {$2m-1$};
        \node[scale=0.5] ('2m') at (5,-0.5) {$2m$};
      
        \node[scale=0.5] ('v1') at (-0.5,1) {$1$};
        \node[scale=0.5] ('v2') at (-0.5,2) {$2$};
        \node[scale=0.5] ('vn-3') at (-0.5,3) {};
        \node[scale=0.5] ('vn-2') at (-0.5,4) {$n-2$};
        \node[scale=0.5] ('vn-1') at (-0.5,5) {$n-1$};

        \draw[very thick](0,0) to (0,2.2);
        \draw[very thick](0,2.8) to (0,5);
        \draw[very thick](0,5) to (2.2,5);
        \draw[very thick](2.8,5) to (5,5);
        \draw[very thick](5,5) to (5,4);
        \draw[very thick](5,4) to (2.8,4);
        \draw[very thick](2.2,4) to (1,4);
        \draw[very thick](1,4) to (1,3);
        \draw[very thick](2.2,3) to (1,3);
        \draw[very thick](2.8,3) to (5,3);
        \draw[very thick](5,2.8) to (5,3);
        \draw[very thick](5,2.2) to (5,2);
        \draw[very thick](2.8,2) to (5,2);
        \draw[very thick](2.2,2) to (1,2);
        \draw[very thick](1,1) to (1,2);
        \draw[very thick](1,1) to (2.2,1);
        \draw[very thick](5,1) to (2.8,1);
        \draw[very thick](5,1) to (5,0);
        \draw[very thick](2.8,0) to (5,0);
        \draw[very thick](2.2,0) to (1,0);
  
        \draw[very thick](0,0) to (1,0);
      
        \draw[dotted](0,2) to (0,3);
        \draw[dotted](1,2) to (1,3);
        \draw[dotted](2,2) to (2,3);
        \draw[dotted](3,2) to (3,3);
        \draw[dotted](4,2) to (4,3);
        \draw[dotted](5,2) to (5,3);
        \draw[dotted](2,0) to (3,0);
        \draw[dotted](2,1) to (3,1);
        \draw[dotted](2,2) to (3,2);
        \draw[dotted](2,3) to (3,3);
        \draw[dotted](2,4) to (3,4);
        \draw[dotted](2,5) to (3,5);
  
      \end{tikzpicture}  
      \subcaption{$n$ even}\label{fig:evenout}
    \end{minipage}
    \hspace{0.1\textwidth}
    \begin{minipage}[b]{0.4\textwidth}
      \centering
      \begin{tikzpicture}[scale=0.6]
        \draw[help lines] (0,0) grid (2.2,2.2);
        \draw[help lines] (0,2.8) grid (2.2,5);
        \draw[help lines] (2.8,0) grid (5,2.2);
        \draw[help lines] (2.8,2.8) grid (5,5);
      
        \node[scale=0.5] ('0') at (-0.25,-0.25) {$0$};
        \node[scale=0.5] ('1') at (1,-0.5) {$1$};
        \node[scale=0.5] ('2') at (2,-0.5) {$2$};
        \node[scale=0.5] ('2m-2') at (3,-0.5) {};
        \node[scale=0.5] ('2m-1') at (4,-0.5) {$2m-1$};
        \node[scale=0.5] ('2m') at (5,-0.5) {$2m$};
      
        \node[scale=0.5] ('v1') at (-0.5,1) {$1$};
        \node[scale=0.5] ('v2') at (-0.5,2) {$2$};
        \node[scale=0.5] ('vn-3') at (-0.5,3) {};
        \node[scale=0.5] ('vn-2') at (-0.5,4) {$n-2$};
        \node[scale=0.5] ('vn-1') at (-0.5,5) {$n-1$};

        \draw[very thick](0,0) to (0,2.2);
        \draw[very thick](0,2.8) to (0,5);
        \draw[very thick](0,5) to (2.2,5);
        \draw[very thick](2.8,5) to (5,5);
        \draw[very thick](5,5) to (5,4);
        \draw[very thick](5,4) to (2.8,4);
        \draw[very thick](2.2,4) to (1,4);
        \draw[very thick](1,4) to (1,3);
        \draw[very thick](2.2,3) to (1,3);
        \draw[very thick](2.8,3) to (5,3);
        \draw[very thick](5,2.8) to (5,3);
        \draw[very thick](1,2.2) to (1,2);
        \draw[very thick](2.8,2) to (5,2);
        \draw[very thick](2.2,2) to (1,2);
        \draw[very thick](5,1) to (5,2);
        \draw[very thick](1,1) to (2.2,1);
        \draw[very thick](5,1) to (2.8,1);
        \draw[very thick](1,1) to (1,0);
        \draw[very thick](2.8,0) to (5,0);
        \draw[very thick](2.2,0) to (1,0);
        \draw[thick, bend left=30](0,0) to (5,0);
      
        \draw[dotted](0,2) to (0,3);
        \draw[dotted](1,2) to (1,3);
        \draw[dotted](2,2) to (2,3);
        \draw[dotted](3,2) to (3,3);
        \draw[dotted](4,2) to (4,3);
        \draw[dotted](5,2) to (5,3);
        \draw[dotted](2,0) to (3,0);
        \draw[dotted](2,1) to (3,1);
        \draw[dotted](2,2) to (3,2);
        \draw[dotted](2,3) to (3,3);
        \draw[dotted](2,4) to (3,4);
        \draw[dotted](2,5) to (3,5);
  
      \end{tikzpicture}
      \subcaption{$n$ odd}\label{fig:oddout}
    \end{minipage}
    \caption{Hamiltonian cycle on $C_{2m+1}\times P_n$}
  \end{figure}
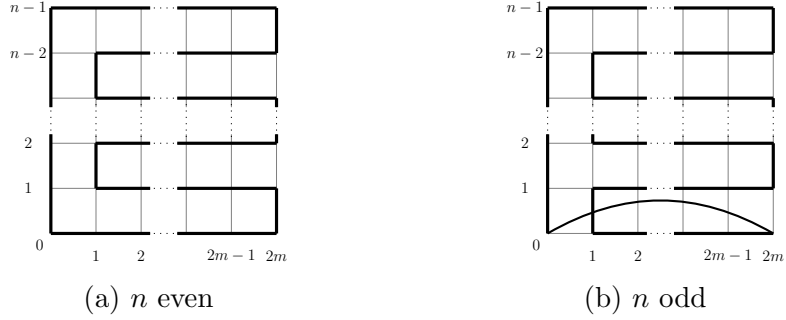
  Considering the vertices of the graph as lattice points on the $xy$-plane, the method for constructing $E_G$ is as follows.
  \begin{itemize}
    \item[1] Start at $(0,0)$, then increase the $y$-coordinate by $1$ to reach $(0,n-1)$.
    \item[2] Increase the $x$-coordinate by $1$ to reach $(2m,n-1)$.
    \item[3] Decrease the $y$-coordinate by $1$ to reach $(2m,n-2)$, then decrease the $x$-coordinate by $1$ to reach $(1,n-2)$.
    \item[4] Decrease the $y$-coordinate by $1$ to reach $(1,n-3)$, then increase the $x$-coordinate by $1$ to reach $(2m,n-3)$.
    \item[5] Repeat steps (3) and (4) as long as the $y$-coordinate is non-negative.
    \item[6] The final point reached will be either $(1,0)$ or $(2m,0)$, ensuring that we can eventually reach $(0,0)$.
  \end{itemize}
  If we replace $(0,n-1)$ with $(0,k-1)$ in step 1, we can construct $C_{k(2m+1)}$ in the manner shown in \Cref{fig:kHamilton}.
Therefore, for any integer $k$ such that $1\leq k\leq n$, we have $C_{k(2m+1)}\subset C_{2m+1}\times P_n$.

  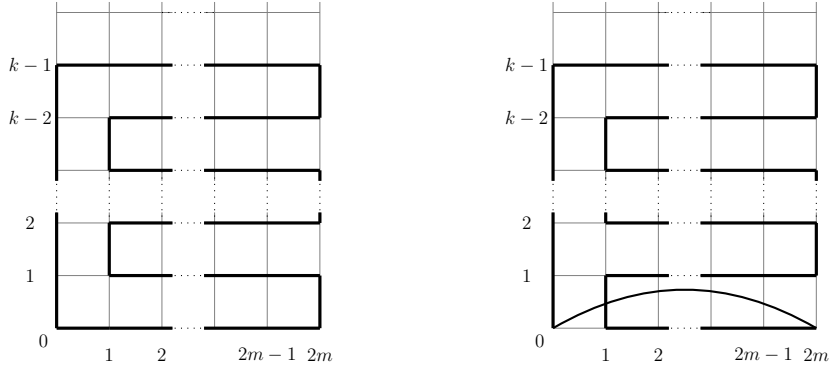
\begin{figure}[H]
    \begin{minipage}[b]{0.4\columnwidth}
      \centering
      \begin{tikzpicture}[scale=0.7]
        \draw[help lines] (0,0) grid (2.2,2.2);
        \draw[help lines] (0,2.8) grid (2.2,6.2);
        \draw[help lines] (2.8,0) grid (5,2.2);
        \draw[help lines] (2.8,2.8) grid (5,6.2);
          
        \node[scale=0.6] ('0') at (-0.25,-0.25) {$0$};
        \node[scale=0.6] ('1') at (1,-0.5) {$1$};
        \node[scale=0.6] ('2') at (2,-0.5) {$2$};
        \node[scale=0.6] ('2m-2') at (3,-0.5) {};
        \node[scale=0.6] ('2m-1') at (4,-0.5) {$2m-1$};
        \node[scale=0.6] ('2m') at (5,-0.5) {$2m$};
          
        \node[scale=0.6] ('v1') at (-0.5,1) {$1$};
        \node[scale=0.6] ('v2') at (-0.5,2) {$2$};
        \node[scale=0.6] ('vk-3') at (-0.5,3) {};
        \node[scale=0.6] ('vk-2') at (-0.5,4) {$k-2$};
        \node[scale=0.6] ('vk-1') at (-0.5,5) {$k-1$};

        \draw[very thick](0,0) to (0,2.2);
        \draw[very thick](0,2.8) to (0,5);
        \draw[very thick](0,5) to (2.2,5);
        \draw[very thick](2.8,5) to (5,5);
        \draw[very thick](5,5) to (5,4);
        \draw[very thick](5,4) to (2.8,4);
        \draw[very thick](2.2,4) to (1,4);
        \draw[very thick](1,4) to (1,3);
        \draw[very thick](2.2,3) to (1,3);
        \draw[very thick](2.8,3) to (5,3);
        \draw[very thick](5,2.8) to (5,3);
        \draw[very thick](5,2.2) to (5,2);
        \draw[very thick](2.8,2) to (5,2);
        \draw[very thick](2.2,2) to (1,2);
        \draw[very thick](1,1) to (1,2);
        \draw[very thick](1,1) to (2.2,1);
        \draw[very thick](5,1) to (2.8,1);
        \draw[very thick](5,1) to (5,0);
        \draw[very thick](2.8,0) to (5,0);
        \draw[very thick](2.2,0) to (1,0);
  
        \draw[very thick](0,0) to (1,0);
      
        \draw[dotted](0,2) to (0,3);
        \draw[dotted](1,2) to (1,3);
        \draw[dotted](2,2) to (2,3);
        \draw[dotted](3,2) to (3,3);
        \draw[dotted](4,2) to (4,3);
        \draw[dotted](5,2) to (5,3);
        \draw[dotted](2,0) to (3,0);
        \draw[dotted](2,1) to (3,1);
        \draw[dotted](2,2) to (3,2);
        \draw[dotted](2,3) to (3,3);
        \draw[dotted](2,4) to (3,4);
        \draw[dotted](2,5) to (3,5);
        \draw[dotted](2,6) to (3,6);
      \end{tikzpicture}  
  
    \end{minipage}
    \hspace{0.1\columnwidth}
    \begin{minipage}[b]{0.4\columnwidth}
      \centering
      \begin{tikzpicture}[scale=0.7]
        \draw[help lines] (0,0) grid (2.2,2.2);
        \draw[help lines] (0,2.8) grid (2.2,6.2);
        \draw[help lines] (2.8,0) grid (5,2.2);
        \draw[help lines] (2.8,2.8) grid (5,6.2);
          
        \node[scale=0.6] ('0') at (-0.25,-0.25) {$0$};
        \node[scale=0.6] ('1') at (1,-0.5) {$1$};
        \node[scale=0.6] ('2') at (2,-0.5) {$2$};
        \node[scale=0.6] ('2m-2') at (3,-0.5) {};
        \node[scale=0.6] ('2m-1') at (4,-0.5) {$2m-1$};
        \node[scale=0.6] ('2m') at (5,-0.5) {$2m$};
          
        \node[scale=0.6] ('v1') at (-0.5,1) {$1$};
        \node[scale=0.6] ('v2') at (-0.5,2) {$2$};
        \node[scale=0.6] ('vk-3') at (-0.5,3) {};
        \node[scale=0.6] ('vk-2') at (-0.5,4) {$k-2$};
        \node[scale=0.6] ('vk-1') at (-0.5,5) {$k-1$};

        \draw[very thick](0,0) to (0,2.2);
        \draw[very thick](0,2.8) to (0,5);
        \draw[very thick](0,5) to (2.2,5);
        \draw[very thick](2.8,5) to (5,5);
        \draw[very thick](5,5) to (5,4);
        \draw[very thick](5,4) to (2.8,4);
        \draw[very thick](2.2,4) to (1,4);
        \draw[very thick](1,4) to (1,3);
        \draw[very thick](2.2,3) to (1,3);
        \draw[very thick](2.8,3) to (5,3);
        \draw[very thick](5,2.8) to (5,3);
        \draw[very thick](1,2.2) to (1,2);
        \draw[very thick](2.8,2) to (5,2);
        \draw[very thick](2.2,2) to (1,2);
        \draw[very thick](5,1) to (5,2);
        \draw[very thick](1,1) to (2.2,1);
        \draw[very thick](5,1) to (2.8,1);
        \draw[very thick](1,1) to (1,0);
        \draw[very thick](2.8,0) to (5,0);
        \draw[very thick](2.2,0) to (1,0);
        \draw[thick, bend left=30](0,0) to (5,0);
      
        \draw[dotted](0,2) to (0,3);
        \draw[dotted](1,2) to (1,3);
        \draw[dotted](2,2) to (2,3);
        \draw[dotted](3,2) to (3,3);
        \draw[dotted](4,2) to (4,3);
        \draw[dotted](5,2) to (5,3);
        \draw[dotted](2,0) to (3,0);
        \draw[dotted](2,1) to (3,1);
        \draw[dotted](2,2) to (3,2);
        \draw[dotted](2,3) to (3,3);
        \draw[dotted](2,4) to (3,4);
        \draw[dotted](2,5) to (3,5);
        \draw[dotted](2,6) to (3,6);
      \end{tikzpicture}
  
    \end{minipage}
    \caption{$C_{k(2m+1)}$ in $C_{2m+1}\times P_n$}\label{fig:kHamilton}
  \end{figure}

Next, let $x$ and $k$ be integers such that $1\leq x\leq 2m$ and $1\leq k\leq n-1$.
We will now demonstrate that $C_{k(2m+1)+x}\subset C_{2m+1}\times P_n$.
We construct $C_{k(2m+1)+x}$ based on the construction of $C_{k(2m+1)}$.
Let $\alpha$ be any vertex on $C_{2m+1}$.
We consider selecting edges up to $(\alpha,1)$, where $\alpha$ is any integer such that $0<\alpha\leq 2m$.
\begin{itemize}
  \item[1] Start at $(0,0)$, then increase the $y$-coordinate by $1$ to reach $(0,k)$.
  \item[2] Increase the $x$-coordinate by $1$ to reach $(2m,k)$.
  \item[3] Decrease the $y$-coordinate by $1$ to reach $(2m,k-1)$, then decrease the $x$-coordinate by $1$ to reach $(1,k-1)$.
  \item[4] Decrease the $y$-coordinate by $1$ to reach $(1,k-2)$, then increase the $x$-coordinate by $1$ to reach $(2m,k-2)$.
  \item[5] Repeat steps (3) and (4) as long as the coordinates are $(\alpha,1)$.
  \item[6] Decrease the $y$-coordinate by $1$ to reach $(\alpha,0)$, where $\alpha\neq 0$.
  \item[7] If $k$ is even, then decrease the $x$-coordinate by $1$ to reach $(0,0)$. Otherwise, increase the $x$-coordinate by $1$ to reach $(0,0)$.
\end{itemize}
This construction is shown in \Cref{fig:xodd}.

When $k$ is even, the graph in which all edges marked with bold lines are included corresponds to a cycle, 
and this cycle passes through 
\[(k+1)(2m+1)-2(2m-\alpha)=k(2m+1)+2(\alpha-m)+1\] 
vertices.
Therefore, if $\alpha=m+i$, then this graph is $C_{k(2m+1)+2i+1}$.
Since $\alpha\leq 2m$, we have $i\leq m$.
Hence, we can choose any $i$ such that $0\leq i\leq m$.
Thus, we can construct $C_{k(2m+1)+x}$ when $x$ is odd and $k$ is even.

When $k$ is odd, the graph is also a cycle that passes through $(k+1)(2m+1)-2(\alpha -1)$ vertices. 
If $\alpha=m+1-i$, then it corresponds to $C_{k(2m+1)+2i+1}$ in a similar manner.
Since $0<\alpha$, we can choose any $i$ such that $0\leq i\leq m$.
Thus, we can conclude that $C_{k(2m+1)+x}\subset C_{2m+1}\times P_n$ holds when both $x$ and $k$ are odd.
In summary, we can state for any $k$ that $C_{k(2m+1)+x}\subset C_{2m+1}\times P_n$ when $x$ is odd.

Now, consider when $x$ is even.
Let $\alpha$ be any integer such that $0\leq\alpha\leq m$.
We modify steps 1 and 2 of the previous method in the following manner.
\begin{itemize}
  \item[1] Start at $(0,0)$, then increase the $y$-coordinate by $1$ to reach $(0,k)$.
  \item[2] Choose edges as follows:
  \begin{align*}
    &\{(0,k),(1,k)\},\{(1,k),(1,k-1)\},\{(1,k-1),(2,k-1)\},\{(2,k-1),(2,k)\}, \ldots\\
    &\{(2\alpha -2,k),(2\alpha -1,k)\},\{(2\alpha -1,k),(2\alpha -1,k-1)\},\{(2\alpha-1,k-1),(2\alpha,k-1)\},\ldots,\\
    &\{(2m-1,k-1),(2m,k-1)\}.
  \end{align*}
  \item[3] Decrease the $y$-coordinate by $1$ to reach $(2m,k-2)$, then decrease the $x$-coordinate by $1$ to reach $(1,k-2)$.
  \item[4] Decrease the $y$-coordinate by $1$ to reach $(1,k-3)$, then increase the $x$-coordinate by $1$ to reach $(2m,k-3)$.
  \item[5] Repeat steps (3) and (4) as long as the $y$-coordinate is non-negative.
  \item[6] The final point reached will be either $(1,0)$ or $(2m,0)$, ensuring that we can eventually reach $(0,0)$.
\end{itemize}
This construction is illustrated in \Cref{fig:xeven}.

\begin{figure}[H]
  \centering
  \begin{minipage}[b]{0.4\textwidth}

    \begin{tikzpicture}[xscale=0.65,yscale=0.65]
      \draw[help lines] (0,0) grid (2.2,2.2);
      \draw[help lines] (0,2.8) grid (2.2,6.2);
      \draw[help lines] (2.8,0) grid (5.2,2.2);
      \draw[help lines] (2.8,2.8) grid (5.2,6.2);
      \draw[help lines] (5.8,0) grid (8,2.2);
      \draw[help lines] (5.8,2.8) grid (8,6.2);
    
      \node[scale=0.6] ('0') at (-0.25,-0.25) {$0$};
      \node[scale=0.6] ('1') at (1,-0.5) {$1$};
      \node[scale=0.6] ('2') at (2,-0.5) {$2$};
      \node[scale=0.6] ('2m-1') at (7,-0.5) {$2m-1$};
      \node[scale=0.6] ('2m') at (8,-0.5) {$2m$};
      \node[scale=0.6] ('alpha') at (4,-0.5) {$\alpha$};
    
      \node[scale=0.6] ('v1') at (-0.5,1) {$1$};
      \node[scale=0.6] ('v2') at (-0.5,2) {$2$};
      \node[scale=0.6] ('vn-3') at (-0.5,3) {$k-2$};
      \node[scale=0.6] ('vn-2') at (-0.5,4) {$k-1$};
      \node[scale=0.6] ('vn-1') at (-0.5,5) {$k$};
    
      \draw[very thick](0,0) to (0,2.2);
      \draw[very thick](0,2.8) to (0,5);
      \draw[very thick](0,5) to (2.2,5);
      \draw[very thick](2.8,5) to (5,5);
      \draw[very thick](5,5) to (5.2,5);
      \draw[very thick](5.8,5) to (8,5);
      \draw[very thick](8,4) to (8,5);
      \draw[very thick](5,4) to (2.8,4);
      \draw[very thick](2.2,4) to (1,4);
      \draw[very thick](1,4) to (1,3);
      \draw[very thick](2.2,3) to (1,3);
      \draw[very thick](2.8,3) to (5,3);
      \draw[very thick](8,2.8) to (8,3);
      \draw[very thick](8,2.2) to (8,2);
      \draw[very thick](2.8,2) to (5,2);
      \draw[very thick](2.2,2) to (1,2);
      \draw[very thick](1,1) to (1,2);
      \draw[very thick](1,1) to (2.2,1);
      \draw[very thick](4,1) to (2.8,1);
      \draw[very thick](4,1) to (4,0);
      \draw[very thick](2.8,0) to (4,0);
      \draw[very thick](2.2,0) to (1,0);
      \draw[very thick](5.8,4) to (8,4);
      \draw[very thick](5.8,3) to (8,3);
      \draw[very thick](5.8,2) to (8,2);
    
      \draw[very thick](0,0) to (1,0);
    
      \draw[dotted](0,2) to (0,3);
      \draw[dotted](1,2) to (1,3);
      \draw[dotted](2,2) to (2,3);
      \draw[dotted](3,2) to (3,3);
      \draw[dotted](4,2) to (4,3);
      \draw[dotted](5,2) to (5,3);
      \draw[dotted](6,2) to (6,3);
      \draw[dotted](7,2) to (7,3);
      \draw[dotted](8,2) to (8,3);
      \draw[dotted](2,0) to (3,0);
      \draw[dotted](2,1) to (3,1);
      \draw[dotted](2,2) to (3,2);
      \draw[dotted](2,3) to (3,3);
      \draw[dotted](2,4) to (3,4);
      \draw[dotted](2,5) to (3,5);
      \draw[dotted](2,6) to (3,6);
      \draw[dotted](5,0) to (6,0);
      \draw[dotted](5,1) to (6,1);
      \draw[dotted](5,2) to (6,2);
      \draw[dotted](5,3) to (6,3);
      \draw[dotted](5,4) to (6,4);
      \draw[dotted](5,5) to (6,5);
      \draw[dotted](5,6) to (6,6);
    
    \end{tikzpicture}
    \subcaption{$k$ even}\label{fig:xoddkeven}
  \end{minipage}
  \hspace{0.08\textwidth}
  \begin{minipage}[b]{0.4\textwidth}
    \begin{tikzpicture}[xscale=0.65,yscale=0.65]
      \draw[help lines] (0,0) grid (2.2,2.2);
      \draw[help lines] (0,2.8) grid (2.2,6.2);
      \draw[help lines] (2.8,0) grid (5.2,2.2);
      \draw[help lines] (2.8,2.8) grid (5.2,6.2);
      \draw[help lines] (5.8,0) grid (8,2.2);
      \draw[help lines] (5.8,2.8) grid (8,6.2);
    
      \node[scale=0.6] ('0') at (-0.25,-0.25) {$0$};
      \node[scale=0.6] ('1') at (1,-0.5) {$1$};
      \node[scale=0.6] ('2') at (2,-0.5) {$2$};
      \node[scale=0.6] ('2m-1') at (7,-0.5) {$2m-1$};
      \node[scale=0.6] ('2m') at (8,-0.5) {$2m$};
      \node[scale=0.6] ('alpha') at (4,-0.5) {$\alpha$};
    
      \node[scale=0.6] ('v1') at (-0.5,1) {$1$};
      \node[scale=0.6] ('v2') at (-0.5,2) {$2$};
      \node[scale=0.6] ('vn-3') at (-0.5,3) {$k-2$};
      \node[scale=0.6] ('vn-2') at (-0.5,4) {$k-1$};
      \node[scale=0.6] ('vn-1') at (-0.5,5) {$k$};
    
      \draw[very thick](0,0) to (0,2.2);
      \draw[very thick](0,2.8) to (0,5);
      \draw[very thick](0,5) to (2.2,5);
      \draw[very thick](2.8,5) to (5,5);
      \draw[very thick](5,5) to (5.2,5);
      \draw[very thick](5.8,5) to (8,5);
      \draw[very thick](8,4) to (8,5);
      \draw[very thick](5,4) to (2.8,4);
      \draw[very thick](2.2,4) to (1,4);
      \draw[very thick](1,4) to (1,3);
      \draw[very thick](2.2,3) to (1,3);
      \draw[very thick](2.8,3) to (5,3);
      \draw[very thick](8,2.8) to (8,3);
      \draw[very thick](1,2.2) to (1,2);
      \draw[very thick](2.8,2) to (5,2);
      \draw[very thick](2.2,2) to (1,2);
      \draw[very thick](8,1) to (8,2);
      \draw[very thick](8,1) to (5.8,1);
      \draw[very thick](5.2,1) to (4,1);
      \draw[very thick](4,1) to (4,0);
      \draw[very thick](4,0) to (5.2,0);
      \draw[very thick](5.8,0) to (8,0);
      \draw[very thick](5.8,4) to (8,4);
      \draw[very thick](5.8,3) to (8,3);
      \draw[very thick](5.8,2) to (8,2);
      \draw[thick, bend left=20](0,0) to (8,0);
    
      \draw[dotted](0,2) to (0,3);
      \draw[dotted](1,2) to (1,3);
      \draw[dotted](2,2) to (2,3);
      \draw[dotted](3,2) to (3,3);
      \draw[dotted](4,2) to (4,3);
      \draw[dotted](5,2) to (5,3);
      \draw[dotted](6,2) to (6,3);
      \draw[dotted](7,2) to (7,3);
      \draw[dotted](8,2) to (8,3);
      \draw[dotted](2,0) to (3,0);
      \draw[dotted](2,1) to (3,1);
      \draw[dotted](2,2) to (3,2);
      \draw[dotted](2,3) to (3,3);
      \draw[dotted](2,4) to (3,4);
      \draw[dotted](2,5) to (3,5);
      \draw[dotted](5,0) to (6,0);
      \draw[dotted](5,1) to (6,1);
      \draw[dotted](5,2) to (6,2);
      \draw[dotted](5,3) to (6,3);
      \draw[dotted](5,4) to (6,4);
      \draw[dotted](5,5) to (6,5);
    \end{tikzpicture}
    \subcaption{$k$ odd}\label{fig:xoddkodd}
  \end{minipage}
\caption{Construction when $x$ is odd}\label{fig:xodd}

\end{figure}
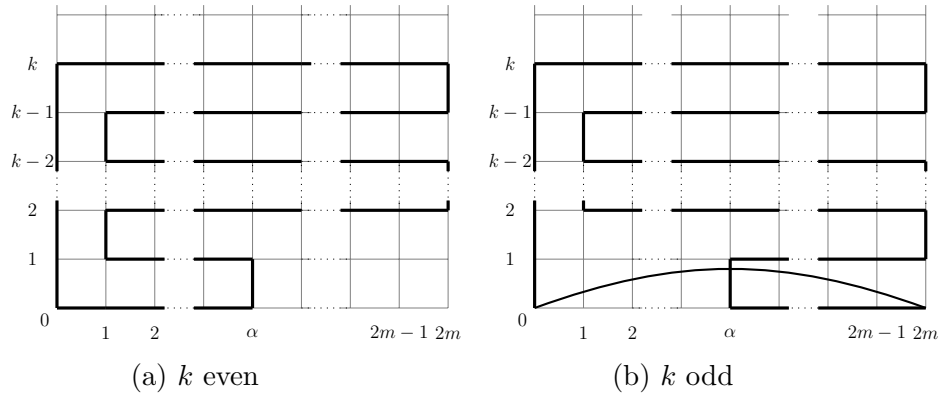

\begin{figure}[H]
\centering
\begin{minipage}[b]{0.4\textwidth}
  \begin{tikzpicture}[xscale=0.65,yscale=0.65]
    \draw[help lines] (0,0) grid (2.2,2.2);
    \draw[help lines] (0,2.8) grid (2.2,6.2);
    \draw[help lines] (2.8,0) grid (5.2,2.2);
    \draw[help lines] (2.8,2.8) grid (5.2,6.2);
    \draw[help lines] (5.8,0) grid (8,2.2);
    \draw[help lines] (5.8,2.8) grid (8,6.2);
  
    \node[scale=0.6] ('0') at (-0.25,-0.25) {$0$};
    \node[scale=0.6] ('1') at (1,-0.5) {$1$};
    \node[scale=0.6] ('2') at (2,-0.5) {$2$};
    \node[scale=0.6] ('2m-2') at (6,-0.5) {};
    \node[scale=0.6] ('2m-1') at (7,-0.5) {$2m-1$};
    \node[scale=0.6] ('2m') at (8,-0.5) {$2m$};
    \node[scale=0.6] ('alpha') at (4,-0.5) {$2\alpha -1$};
  
    \node[scale=0.6] ('v1') at (-0.5,1) {$1$};
    \node[scale=0.6] ('v2') at (-0.5,2) {$2$};
    \node[scale=0.6] ('vn-3') at (-0.5,3) {$k-2$};
    \node[scale=0.6] ('vn-2') at (-0.5,4) {$k-1$};
    \node[scale=0.6] ('vn-1') at (-0.5,5) {$k$};
  
    \draw[very thick](0,0) to (0,2.2);
    \draw[very thick](0,2.8) to (0,5);
    \draw[very thick](0,5) to (1,5);
    \draw[very thick](1,5) to (1,4);
    \draw[very thick](1,4) to (2,4);
    \draw[very thick](2,4) to (2,5);
    \draw[very thick](2.2,5) to (2,5);
    \draw[very thick](3,4) to (2.8,4);
    \draw[very thick](3,5) to (3,4);
    \draw[very thick](3,5) to (4,5);
    \draw[very thick](4,5) to (4,4);
    \draw[very thick](5.2,4) to (4,4);
    \draw[very thick](5.8,4) to (8,4);
  
    \draw[very thick](8,4) to (8,3);
    \draw[very thick](2.2,3) to (1,3);
    \draw[very thick](2.8,3) to (5,3);
    \draw[very thick](1,2.8) to (1,3);
    \draw[very thick](8,2.2) to (8,2);
    \draw[very thick](2.8,2) to (5,2);
    \draw[very thick](2.2,2) to (1,2);
    \draw[very thick](1,1) to (1,2);
    \draw[very thick](1,1) to (2.2,1);
    \draw[very thick](5,1) to (2.8,1);
    \draw[very thick](8,1) to (8,0);
    \draw[very thick](2.8,0) to (5,0);
    \draw[very thick](2.2,0) to (1,0);
    \draw[very thick](5.8,4) to (8,4);
    \draw[very thick](5.8,3) to (8,3);
    \draw[very thick](5.8,2) to (8,2);
    \draw[very thick](5.8,1) to (8,1);
    \draw[very thick](5.8,0) to (8,0);
    \draw[very thick](5.2,2) to (5,2);
    \draw[very thick](5.2,1) to (5,1);
    \draw[very thick](5.2,0) to (5,0);
  
    \draw[very thick](0,0) to (1,0);
  
    \draw[dotted](0,2) to (0,3);
    \draw[dotted](1,2) to (1,3);
    \draw[dotted](2,2) to (2,3);
    \draw[dotted](3,2) to (3,3);
    \draw[dotted](4,2) to (4,3);
    \draw[dotted](5,2) to (5,3);
    \draw[dotted](6,2) to (6,3);
    \draw[dotted](7,2) to (7,3);
    \draw[dotted](8,2) to (8,3);
    \draw[dotted](2,0) to (3,0);
    \draw[dotted](2,1) to (3,1);
    \draw[dotted](2,2) to (3,2);
    \draw[dotted](2,3) to (3,3);
    \draw[dotted](2,4) to (3,4);
    \draw[dotted](2,5) to (3,5);
    \draw[dotted](5,0) to (6,0);
    \draw[dotted](5,1) to (6,1);
    \draw[dotted](5,2) to (6,2);
    \draw[dotted](5,3) to (6,3);
    \draw[dotted](5,4) to (6,4);
    \draw[dotted](5,5) to (6,5);
  \end{tikzpicture}
  \subcaption{$k$ even}
\end{minipage}
\hspace{0.08\textwidth}
\begin{minipage}[b]{0.4\textwidth}
  \begin{tikzpicture}[xscale=0.65,yscale=0.65]
    \draw[help lines] (0,0) grid (2.2,2.2);
    \draw[help lines] (0,2.8) grid (2.2,6.2);
    \draw[help lines] (2.8,0) grid (5.2,2.2);
    \draw[help lines] (2.8,2.8) grid (5.2,6.2);
    \draw[help lines] (5.8,0) grid (8,2.2);
    \draw[help lines] (5.8,2.8) grid (8,6.2);
  
    \node[scale=0.6] ('0') at (-0.25,-0.25) {$0$};
    \node[scale=0.6] ('1') at (1,-0.5) {$1$};
    \node[scale=0.6] ('2') at (2,-0.5) {$2$};
    \node[scale=0.6] ('2m-2') at (6,-0.5) {};
    \node[scale=0.6] ('2m-1') at (7,-0.5) {$2m-1$};
    \node[scale=0.6] ('2m') at (8,-0.5) {$2m$};
    \node[scale=0.6] ('alpha') at (4,-0.5) {$2\alpha -1$};
  
    \node[scale=0.6] ('v1') at (-0.5,1) {$1$};
    \node[scale=0.6] ('v2') at (-0.5,2) {$2$};
    \node[scale=0.6] ('vn-3') at (-0.5,3) {$k-2$};
    \node[scale=0.6] ('vn-2') at (-0.5,4) {$k-1$};
    \node[scale=0.6] ('vn-1') at (-0.5,5) {$k$};
  
    \draw[very thick](0,0) to (0,2.2);
    \draw[very thick](0,2.8) to (0,5);
    \draw[very thick](0,5) to (1,5);
    \draw[very thick](1,5) to (1,4);
    \draw[very thick](1,4) to (2,4);
    \draw[very thick](2,4) to (2,5);
    \draw[very thick](2.2,5) to (2,5);
    \draw[very thick](3,4) to (2.8,4);
    \draw[very thick](3,5) to (3,4);
    \draw[very thick](3,5) to (4,5);
    \draw[very thick](4,5) to (4,4);
    \draw[very thick](5.2,4) to (4,4);
    \draw[very thick](5.8,4) to (8,4);
  
    \draw[very thick](8,4) to (8,3);
    \draw[very thick](2.2,3) to (1,3);
    \draw[very thick](2.8,3) to (5,3);
    \draw[very thick](1,2.8) to (1,3);
    \draw[very thick](1,2.2) to (1,2);
    \draw[very thick](2.8,2) to (5,2);
    \draw[very thick](2.2,2) to (1,2);
    \draw[very thick](8,1) to (8,2);
    \draw[very thick](1,1) to (2.2,1);
    \draw[very thick](5,1) to (2.8,1);
    \draw[very thick](1,1) to (1,0);
    \draw[very thick](2.8,0) to (5,0);
    \draw[very thick](2.2,0) to (1,0);
    \draw[very thick](5.8,4) to (8,4);
    \draw[very thick](5.8,3) to (8,3);
    \draw[very thick](5.8,2) to (8,2);
    \draw[very thick](5.8,1) to (8,1);
    \draw[very thick](5.8,0) to (8,0);
    \draw[very thick](5.2,2) to (5,2);
    \draw[very thick](5.2,1) to (5,1);
    \draw[very thick](5.2,0) to (5,0);
  
    \draw[thick, bend left=20](0,0) to (8,0);
  
    \draw[dotted](0,2) to (0,3);
    \draw[dotted](1,2) to (1,3);
    \draw[dotted](2,2) to (2,3);
    \draw[dotted](3,2) to (3,3);
    \draw[dotted](4,2) to (4,3);
    \draw[dotted](5,2) to (5,3);
    \draw[dotted](6,2) to (6,3);
    \draw[dotted](7,2) to (7,3);
    \draw[dotted](8,2) to (8,3);
    \draw[dotted](2,0) to (3,0);
    \draw[dotted](2,1) to (3,1);
    \draw[dotted](2,2) to (3,2);
    \draw[dotted](2,3) to (3,3);
    \draw[dotted](2,4) to (3,4);
    \draw[dotted](2,5) to (3,5);
    \draw[dotted](5,0) to (6,0);
    \draw[dotted](5,1) to (6,1);
    \draw[dotted](5,2) to (6,2);
    \draw[dotted](5,3) to (6,3);
    \draw[dotted](5,4) to (6,4);
    \draw[dotted](5,5) to (6,5);
  \end{tikzpicture}
  \subcaption{$k$ odd}
\end{minipage}

\caption{Construction when $x$ is even}\label{fig:xeven}
\end{figure}
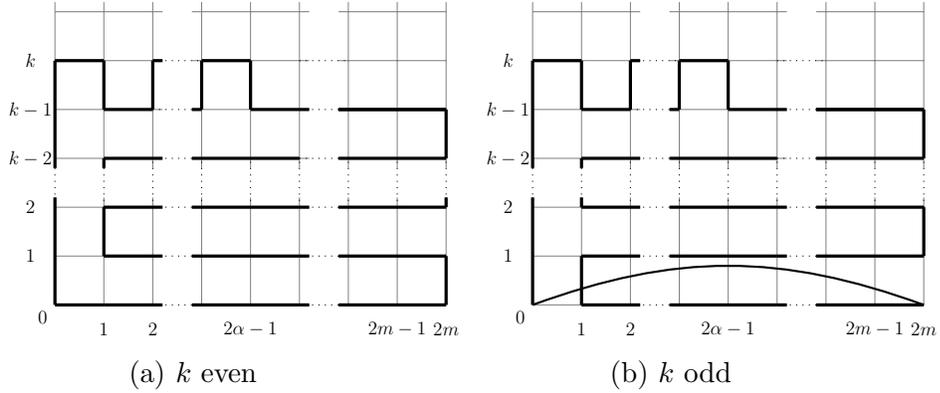

This graph also forms a cycle, which
passes through $k(2m+1)+2\alpha$ vertices.
Taking into account the conditions of $\alpha$, we can observe that for any $x$ that is even, $C_{k(2m+1)+x}\subset C_{2m+1}\times P_n$ holds.

Thus, we can conclude that for any $k$ and $x$ such that $1\leq k\leq n-1$ and $1\leq x\leq 2m$
\[
  C_{k(2m+1)+x}\subset C_{2m+1}\times P_n.
\]
We have already established that
\[
  C_{k(2m+1)}\subset C_{2m+1}\times P_n.
\]
Therefore, the proof is complete.
\end{proof}

The following corollary, which is almost equivalent to \Cref{lem:carPartPanc}, can also be obtained.

\begin{cor}\label{cor:2}
  Let $G_1$ be a Hamiltonian graph with order $2m+1$, and let $G_2$ be a semi-Hamiltonian graph with order $n$, where $m$ and $n$ are any positive integers. 
  Then $G_1\times G_2$ is a Hamiltonian graph. 
  Additionally, for any integer $k$ such that $2m+1\leq k\leq n(2m+1)$, we have $C_k\subset G_1\times G_2$.
\end{cor}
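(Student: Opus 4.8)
The plan is to deduce \Cref{cor:2} directly from \Cref{lem:carPartPanc} by realizing $C_{2m+1}\times P_n$ as a spanning subgraph of $G_1\times G_2$. First I would unwind the hypotheses: since $G_1$ is a Hamiltonian graph of order $2m+1$, it contains a Hamiltonian cycle, that is, a spanning subgraph isomorphic to $C_{2m+1}$; and since $G_2$ is a semi-Hamiltonian graph of order $n$, it contains a Hamiltonian path, that is, a spanning subgraph isomorphic to $P_n$.

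Second, I would record the elementary monotonicity of the Cartesian product with respect to the subgraph relation: if $H_1\subset G_1$ and $H_2\subset G_2$, then $H_1\times H_2\subset G_1\times G_2$. This is immediate from the definition of $\times$, since $V(H_1)\times V(H_2)\subseteq V(G_1)\times V(G_2)$, and any adjacency of the form $((u_1=v_1)\land(u_2\sim v_2))\lor((u_1\sim v_1)\land(u_2=v_2))$ witnessed in $H_1\times H_2$ is also witnessed in $G_1\times G_2$ (each factor relation persists when passing to the larger graph). Applying this with the spanning copy of $C_{2m+1}$ inside $G_1$ and the spanning copy of $P_n$ inside $G_2$ gives a copy of $C_{2m+1}\times P_n$ inside $G_1\times G_2$ which moreover uses all $(2m+1)n$ vertices of $G_1\times G_2$.

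Third, I would invoke \Cref{lem:carPartPanc}. It tells us that $C_{2m+1}\times P_n$ is Hamiltonian, so it contains a cycle through all of its $(2m+1)n$ vertices; these being exactly the vertices of $G_1\times G_2$, that cycle is a Hamiltonian cycle of $G_1\times G_2$, so $G_1\times G_2$ is Hamiltonian. For the second assertion, \Cref{lem:carPartPanc} also yields $C_k\subset C_{2m+1}\times P_n$ for every integer $k$ with $2m+1\le k\le n(2m+1)$, and by transitivity of the subgraph relation $C_k\subset G_1\times G_2$ for all such $k$.

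All the real work is contained in \Cref{lem:carPartPanc}; the only point that requires a little care is the bookkeeping that the embedded copy of $C_{2m+1}\times P_n$ is \emph{spanning}, so that ``Hamiltonian'' transfers to $G_1\times G_2$ rather than merely ``contains a long cycle''. I do not expect any genuine obstacle beyond stating the subgraph-monotonicity of the Cartesian product cleanly and checking the order count $|V(G_1\times G_2)| = (2m+1)n$.
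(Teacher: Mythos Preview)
Your proposal is correct and matches the paper's own proof essentially verbatim: the paper simply observes that by the definitions of Hamiltonian and semi-Hamiltonian, $C_{2m+1}\times P_n\subset G_1\times G_2$, and then applies \Cref{lem:carPartPanc}. Your write-up is just a more careful unpacking of that one sentence, including the (welcome) explicit checks that the Cartesian product is monotone for the subgraph relation and that the embedded $C_{2m+1}\times P_n$ is spanning.
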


\begin{proof}
  By the definitions of a Hamiltonian graph and a semi-Hamiltonian graph, $C_{2m+1}\times P_n\subset G_1\times G_2$. 
  Hence, apply \Cref{lem:carPartPanc}, and the proof complete.
\end{proof}

Next, we prove ~\Cref{thm:carPanc} using ~\Cref{cor:2}.

\begin{proof}[Proofs of \Cref{thm:carPanc}]
  Let the orders of $G_1$ and $G_2$ be $2m+1$ and $n$, respectively.
  Since $G_1$ is pancyclic, it is also a Hamiltonian graph.
  Therefore, by \Cref{cor:2}, we know that for any integer $k$ such that $2m+1\leq k\leq n(2m+1)$, we have $C_k \subset G_1\times G_2$. 
Additionally, the pancyclicity of $G_1$ implies that for any integer $k$ such that $3\leq k\leq 2m+1$, we have $C_k\subset G_1\subset G_1\times G_2$.

These results demonstrate that $G_1\times G_2$ is pancyclic, thus completing the proof.
\end{proof}

\section{Proof of \Cref{thm:main}}\label{sec:pay}

In this section, we provide proofs for \Cref{thm:main} and \Cref{cor:index}.

\begin{proof}[Proof of \Cref{thm:main}]
  ~\Cref{cor:paPanc} states that a Paley graph $P(q)$ is pancyclic when $q$ is a prime number and not equal to $5$. 
  Therefore, we need to consider the case where $q$ is not a prime number, which means $q=q_0^n$, where $q_0$ is a prime number and $n$ is an integer greater than $1$.

We can view the elements of $\FF_{q}=\FF_{q_0^n}$ as an $n$-dimensional vector space composed of polynomials with coefficients from $\FF_{q_0}$. 
Since $(\FF_{q_0^n}^*)^2$ can generate $\FF_{q_0^n}$, there exist basis vectors of $\FF_{q_0^n}$ that are all in $(\FF_{q_0^n}^*)^2$.
We denote these basis vectors as $f_0,f_1,\ldots,f_{n-1}$.

Let $W_0$ be the subspace of $\FF_{q_0^n}$ generated by $f_0$, and let $W^*_{n-1}$ be the subspace generated by $f_1, f_2, \ldots, f_{n-1}$. 
We define $G_{W_0}$ as the induced subgraph of $P(q)$ on $W_0$, and $G_{W^*_{n-1}}$ as the induced subgraph on $W^*_{n-1}$.
We aim to show that
\[
  G_{W_0}\times G_{W^*_{n-1}} \subset P(q).
\]
Now, since any element $\alpha$ in $\mathbb{F}_{q_0^n}$ can be expressed as $\alpha=\alpha_0+\alpha_{n-1}$ using $\alpha_0\in W_0$ and $\alpha_{n-1} \in W^*_{n-1}$, 
we can regard $\mathbb{F}_{q_0^n}$ as $W_0\times W^*_{n-1}$.
Therefore, the vertex set of $G_{W_0}\times G_{W^*_{n-1}}$ is included in the vertex set of $P(q)$.

Let $(\alpha_0,\alpha_{n-1})$ and $(\beta_0,\beta_{n-1})$ be vertices in $G_{W_0}\times G_{W^*_{n-1}}$. 
By the definitions of the Cartesian product and the Paley graph, we can state that
\begin{align*}
  (\alpha_0,\alpha_{n-1})\sim(\beta_0,\beta_{n-1}) 
  &\Longrightarrow  ((\alpha_0=\beta_0)\land(\alpha_{n-1}\sim \beta_{n-1}))\\
  &\quad\quad\quad\quad\lor((\alpha_{0}\sim \beta_{0})\land(\alpha_{n-1}=\beta_{n-1}))\\
  &\Longrightarrow ((\alpha_0=\beta_0)\land(\alpha_{n-1}-\beta_{n-1}\in (\FF_{q_0^n}^*)^2))\\
  &\quad\quad\quad\quad\lor((\alpha_{0}-\beta_{0}\in (\FF_{q_0^n}^*)^2)\land(\alpha_{n-1}=\beta_{n-1}))\\
  &\Longrightarrow (\alpha_0+\alpha_{n-1})-(\beta_0+\beta_{n-1}) \in (\FF_{q_0^n}^*)^2.
\end{align*}
We can equate $\alpha_0+\alpha_{n-1}=\alpha$ and $\beta_0+\beta_{n-1}=\beta$ with the vertices of $P(q)$. 
If $\alpha - \beta \in  (\FF_{q_0^n}^*)^2$, then $\alpha \sim \beta$.
Therefore, we can write
\[
  (\alpha_0,\alpha_{n-1})\sim(\beta_0,\beta_{n-1}) \Longrightarrow \alpha \sim \beta.
\]
This means that any edges in $G_{W_0}\times G_{W^*_{n-1}}$ are also present in $P(q)$. 
Since $P(q)$ contains both the vertex set and the edge set of $G_{W_0}\times G_{W^*_{n-1}}$, we can conclude that $G_{W_0}\times G_{W^*_{n-1}}\subset P(q)$.
Note that $G_{W_0}\times G_{W^*_{n-1}}$ is a spanning subgraph because $|G_{W^*_{n-1}}|=q_0^{n-1},|G_{W_0}|=q_0$.
To apply ~\Cref{thm:carPanc} and ~\Cref{cor:2} to $G_{W_0}\times G_{W^*_{n-1}}$, 
we claim that $G_{W_0}$ is either pancyclic or a Hamiltonian graph, and that $G_{W^*_{n-1}}$ is a semi-Hamiltonian graph.

First, consider $G_{W^*_{n-1}}$.
For any integer $i$ such that $1\leq i\leq n-1$, let $W^*_i$ be the subspace generated by $f_1,f_2,\ldots,f_i$, 
and let $G_{W^*_i}$ be the induced subgraph with vertices from $W^*_i$. 
We claim that $G_{W^*_i}$ is a semi-Hamiltonian graph by induction.

For the base case, when $i=1$, the vertices of $W^*_1$ are 
\[
  \{0,f_1,2f_1,\ldots,(q_0-1)f_1\},
\] 
and since $f_1\in (\FF_q^*)^2$, we can choose the edges
\[
\{(0,f_1),(f_1,2f_1),(2f_1,3f_1),\ldots,((q_0-2)f_1,(q_0-1)f_1),(0,(q_0-1)f_1)\}.  
\]
This is a Hamiltonian cycle of $G_{W^*_1}$.
Therefore, we have shown that $G_{W^*_1}$ is a Hamiltonian graph, implying that $G_{W^*_1}$ is a semi-Hamiltonian graph. 
These edges can be represented as shown in \Cref{fig:hamiX1}.

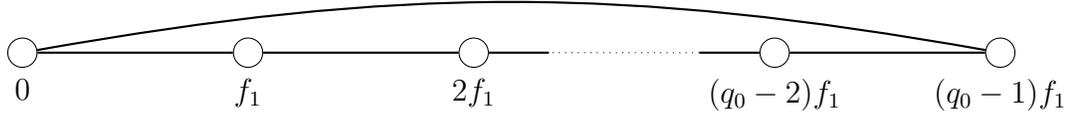
\begin{figure}[H]
  \centering
  \tikzset{my node/.style={circle,inner sep=0pt}}

  \begin{tikzpicture}
    
    \node[fill=white, draw, circle, label=below:$0$] (0) at (0,0) {};
    \node[fill=white, draw, circle, label=below:$f_{1}$] (1) at (3,0) {};
    \node[fill=white, draw, circle, label=below:$2f_{1}$] (2) at (6,0) {};
    \node[fill=white, draw, circle, label=below:$(q_0-2)f_{1}$] (2m-2) at (10,0) {};
    \node[fill=white, draw, circle, label=below:$(q_0-1)f_{1}$] (2m-1) at (13,0) {};

    \draw[thick](0) to (1);
    \draw[thick](2) to (1);
    \draw[thick](2) to (7,0);
    \draw[dotted](9,0) to (7,0);
    \draw[thick](2m-2) to (9,0);
    \draw[thick](2m-2) to (2m-1);
    \draw[thick, bend left=10](0) to (2m-1);

  \end{tikzpicture}
  \caption{Hamiltonian cycle of $f_1$}\label{fig:hamiX1}
\end{figure}

Next, assume that $G_{W^*_i}$ is a semi-Hamiltonian graph for some integer $i$ such that $1\leq i\leq n-2$. 
Now, consider $G_{W^*_{i+1}}$.
Let $X_{i+1}$ be the subspace generated by $f_{i+1}$ alone. 
We also define $G_{X_{i+1}}$ as an induced subgraph with vertices from $X_{i+1}$.
We consider the relationship between $G_{X_{i+1}}\times G_{W^*_i}$ and $G_{W^*_{i+1}}$.
Regarding their vertex sets, 
for any $\alpha \in W^*_{i+1}$, there exist $\alpha_i\in W^*_i$ and $\alpha_{i+1}\in X_{i+1}$ such that $\alpha=\alpha_{i}+\alpha_{i+1}$.
Conversely, for any $\alpha_i\in W^*_i$ and $\alpha_{i+1}\in X_{i+1}$, $\alpha_{i}+\alpha_{i+1}\in W^*_{i+1}$.
Therefore, we have $W^*_{i+1}= X_{i+1}\times W^*_{i}$.
Regarding the set of edges, using the same discussion as in the proof of $G_{W_0}\times G_{W^*_{n-1}}\subset P(q)$, 
for any $(\alpha_i,\alpha_{i+1}),(\beta_i,\beta_{i+1})\in  X_{i+1}\times W^*_{i}$, we claim that
\[
  (\alpha_i,\alpha_{i+1})\sim(\beta_i,\beta_{i+1})\Longrightarrow (\alpha_i+\alpha_{i+1})-(\beta_i+\beta_{i+1})\in (\FF_{q_0^n}^*)^2\Longrightarrow \alpha\sim\beta .
\]
This implies that the edge set of $G_{X_{i+1}}\times G_{W^*_i}$ is contained in the edge set of $G_{W^*_{i+1}}$.
Therefore, $G_{X_{i+1}}\times G_{W^*_i}$ is a spanning subgraph of $G_{W^*_{i+1}}$.

Since $X_{i+1}$ consists of the elements 
\[\{0,f_{i+1},2f_{i+1},\ldots,(q_0-1)f_{i+1}\}\] 
and $f_{i+1}\in (\FF_q^*)^2$,
similar to the case of $W^*_1$, we can choose
\[
\{(0,f_{i+1}),(f_{i+1},2f_{i+1}),(2f_{i+1},3f_{i+1}),\ldots,((q_0-2)f_{i+1},(q_0-1)f_{i+1}),(0,(q_0-1)f_{i+1})\}  
\]
as the edges for $G_{X_{i+1}}$.
This implies that $G_{X_{i+1}}$ is a Hamiltonian graph with order $q_0$, which is a prime number.
Based on the assumption that $G_{W^*_i}$ is a semi-Hamiltonian graph, we can apply \Cref{cor:2} to $G_{X_{i+1}}\times G_{W^*_i}$. 
Hence, we know that $G_{W^*_{i+1}}$ is a semi-Hamiltonian graph.

Therefore, by induction for any $i$ such that $1\leq i\leq n-1$, we can conclude that $G_{W^*_{i}}$ is a semi-Hamiltonian graph.

Next, we demonstrate the pancyclicity of $G_{W_0}$ when $q_0\neq 5$. Since
\[
  W_0=\{0,f_0,2f_0,\ldots,(q_0-1)f_0\},
\]
two vertices $if_0,jf_0\in G_{W_0}$ are adjacent if and only if 
\[
  (j-i)f_0\in (\FF_{q_0^n}^*)^2, 
\]
where $i,j\in \FF_{q_0}, i\neq j$.
Since $f_0\in (\FF_{q_0^n}^*)^2$ and $j-i\in\FF_{q_0}^*$, $(j-i)f_0\in(\FF_{q_0^n}^*)^2$ if and only if 
\[
  j-i\in (\FF_{q_0^n}^*)^2\cap\FF_{q_0}^*. 
\]
Hence, the graph
\begin{align*}
  \phi: G_{W_0}&\mapsto Cay(\FF_{q_0},(\FF_{q_0^n}^*)^2\cap\FF_{q_0}^*)\\
        if_0 &\mapsto i
\end{align*}
is isomorphic.
Next, we show that $Cay(\FF_{q_0},(\FF_{q_0^n}^*)^2\cap\FF_{q_0}^*)$ is pancyclic.
Let 
\[
  H=Cay(\FF_{q_0},(\FF_{q_0^n}^*)^2\cap\FF_{q_0}^*),
\] 
and let $\pm (\FF_{q_0}^*)^2$ be generated by $-1$ and $(\FF_{q_0}^*)^2$.
Since $(\FF_{q_0}^*)^2\subset(\FF_{q_0^n}^*)^2$ and $-1\in (\FF_{q_0^n}^*)^2$, we can write
\[
  \pm(\FF_{q_0}^*)^2 \subset (\FF_{q_0^n}^*)^2\cap\FF_{q_0}^*.
\]
Hence, $Cay(\FF_{q_0},\pm(\FF_{q_0}^*)^2)\subset H$.
Obviously, $Cay(\FF_{q_0},\pm(\FF_{q_0}^*)^2)$ is a spanning subgraph of $H$. 

If $q_0\equiv 1 \pmod{4}$, then 
\[
  \pm (\FF_{q_0}^*)^2=(\FF_{q_0}^*)^2
\] because $-1\in (\FF_{q_0}^*)^2$.
Thus, 
\[
  Cay(\FF_{q_0},\pm (\FF_{q_0}^*)^2)=Cay(\FF_{q_0},(\FF_{q_0}^*)^2)=P(q_0).
\]
Since $q_0$ is a prime number not equal to $5$, \Cref{cor:paPanc} shows that $P(q_0)$ is pancyclic.
Therefore, $Cay(\FF_{q_0},\pm(\FF_{q_0}^*)^2)$ is also pancyclic.

When $q_0\equiv 3 \pmod{4}$, since $-1$ is not in $(\mathbb{F}_{q_0}^{*})^2$,
we have $-\alpha \notin (\mathbb{F}_{q_0}^{*})^2$ for any $\alpha \in (\mathbb{F}_{q_0}^{*})^2$.
This implies that the sets $(\mathbb{F}_{q_0}^{*})^2$ and $-(\mathbb{F}_{q_0}^{})^2$ are disjoint.
Since 
\[
  |(\mathbb{F}_{q_0}^{*})^2|=-\vert(\mathbb{F}_{q_0}^{*})^2\vert = \frac{\vert\mathbb{F}_{q_0}^{*}\vert}{2},
\]
we have 
\[
  \pm (\FF_{q_0}^*)^2 = \mathbb{F}_{q_0}^{*}.
\]
Consequently, 
\[
  Cay(\FF_{q_0},\pm (\FF_{q_0}^*)^2) = Cay(\FF_{q_0},\FF_{q_0}^*) = K_{q_0},
\]
where $K_{q_0}$ is the complete graph of order $q_0$.
Since the complete graph is pancyclic, this is also pancyclic.
Therefore, we can conclude that $Cay(\FF_{q_0},\pm(\FF_{q_0}^*)^2)$ is pancyclic as well.
Since $Cay(\FF_{q_0},\pm(\FF_{q_0}^*)^2)$ is a spanning subgraph of $H$, $H\simeq G_{W_0}$ is also pancyclic.

In summary, we have shown that $G_{W_0}$ is pancyclic when $q_0\neq 5$.
Since $G_{W^*_{n-1}}$ has a Hamiltonian path, and $G_{W_0}$ is pancyclic with an odd order when $q_0\neq 5$, 
we can conclude from \Cref{thm:carPanc} that $G_{W_0}\times G_{W^*_{n-1}}$ is pancyclic.
Furthermore, since we have already established that $G_{W_0}\times G_{W^*_{n-1}}$ is a spanning subgraph of $P(q_0^n)$, 
we can also deduce that $P(q_0^n)$ is pancyclic when $q_0\neq 5$.

Finally, we wish to show that $P(5^n)$ is also pancyclic.
Since 
\[
  W_0=\{0,f_0,2f_0,3f_0,4f_0\}
\] 
and $\pm 1\in (\FF_5^*)^2$, we can choose the edges in $G_{W_0}$ to be
\[
  \{(0,f_{0}),(f_{0},2f_{0}),(2f_{0},3f_{0}),(3f_{0},4f_{0}),(0,4f_{0})\}.  
\]
This shows that $G_{W_0}$ has a Hamiltonian cycle. We also know that $G_{W^*_{n-1}}$ is a semi-Hamiltonian graph. 
Hence, we can apply \Cref{cor:2} and conclude that $G_{W_0}\times G_{W^*_{n-1}}$ contains $C_k$ for any integer $k$ such that $5\leq k\leq 5^n$.
Since $G_{W_0}\times G_{W^*_{n-1}}$ is a spanning subgraph of $P(5^n)$, $C_k\subset P(5^n)$ for any integer $k$ such that $5\leq k\leq 5^n$.
Thus, if we can show that both $C_3$ and $C_4$ are subgraphs of $P(5^n)$, we can establish that $P(5^n)$ is pancyclic.

$P(5^n)$ is a strongly regular graph with parameters $(5^n, \frac{5^n-1}{2}, \frac{5^n-5}{4}, \frac{5^n-1}{4})$. 
By the assumption in the theorem, $n\geq 2$. 
In this case, we have 
\[\frac{5^n-5}{4}\geq \frac{5^2-5}{4}=5.\]
This means that any two vertices in $P(5^n)$ have at least 2 common neighbors.
Therefore, we can conclude that both $C_3$ and $C_4$ are indeed subgraphs of $P(5^n)$.
This implies $P(5^n)$ is pancyclic. 
From the above, we can deduce that $P(q)$ is pancyclic for any $q\neq 5$.
\end{proof}

\Cref{thm:main} determines the Paley index of $C_n$.
We will now prove \Cref{cor:index}.

\begin{proof}[Proof of \Cref{cor:index}]
  Let $\rho_{C_n}$ be the Paley graph of $C_n$.
  If $n>5$, then $P(\lceil n \rceil_{\FF})\neq P(5)$.
  Therefore, by \Cref{thm:main}, $P(\lceil n \rceil_{\FF})$ is pancyclic.
  By the definition of pancyclic and $n\leq \lceil n \rceil_{\FF}$, $C_n\subset P(\lceil n \rceil_{\FF})$.
  Hence, $\rho_{C_n}=\lceil n \rceil_{\FF}$.

  Since $P(5)=C_5$ and $C_5$ does not contain $C_3$ or $C_4$, we have $\rho_{C_5}=5=\lceil 5 \rceil_{\FF}$, and $\rho_{C_3}>5,\rho_{C_4}>5$.
  $P(9)$, which is the Paley graph with the second smallest number of vertices, is also pancyclic.
  Therefore, we have $\rho_{C_3}\leq 9,\rho_{C_4}\leq 9$.
  Thus, 
  \[
    \rho_{C_n}=
      \begin{cases*}
        \lceil n \rceil_{\FF} & $n\geq 5$\\
        9 & $n<5$
      \end{cases*},
  \]
  and the proof is complete.
\end{proof}

\begin{rem}
\Cref{cor:panc} provides one of the simplest methods, only counting the number of edges, to roughly verify the pancyclicity of any graph.
Similarly, \Cref{thm:cayPanc} is a newer method that is also simple. It only involves counting the number of common neighbors of any two vertices,  
thereby allowing one to roughly verify the pancyclicity of a specific Cayley graph.
It seems likely that there are other types of graphs whose pancyclic nature can be easily and roughly verified using these methods.
\end{rem}
\begin{rem}
We have determined the Paley-index of $C_n$, but a problem still persists.
The Paley-index of a graph $G$ is defined as the minimum order of $P(q)$ such that $G$ is a subgraph of $P(q)$.
Similarly, we can define the induced Paley index as the minimum order of $P(q)$ such that $G$ is an induced subgraph of $P(q)$.

\begin{df}[\cite{PaleyInd}]\label{def:PaleyInd2}
  We say $G$ has an induced Paley index $t$ if 
  \[
  t=\min\{q\in \NN\mid \mbox{$G$ is an induced subgraph of $P(q)$}\}. 
  \]
\end{df}

The induced Paley index of $C_n$ remains an open problem.
\end{rem}

\section*{Acknowledgments}

The author thanks to Professors Tsuyoshi~Miezaki, Akihiro~Munemasa and Tomoki~Yamashita for their helpful discussions and comments.


\end{document}